\newcommand{\bw}{\beta(w)}
\newenvironment{smat}{\left(\begin{smallmatrix}}
{\end{smallmatrix}\right)}
\renewcommand{\b}{\mathbf{b}}
\newcommand{\dsum}{\di\sum}
\newcommand{\dep}[1]{{\vrule width 0pt height 0pt depth #1}}
\newcommand{\down}{\downarrow}
\newcommand{\el}{\ensuremath{\ell}}
\DeclareFontFamily{U}{mathx}{\hyphenchar\font45}
\DeclareFontShape{U}{mathx}{m}{n}{
      <5> <6> <7> <8> <9> <10>
      <10.95> <12> <14.4> <17.28> <20.74> <24.88>
      mathx10
      }{}
\DeclareSymbolFont{mathx}{U}{mathx}{m}{n}
\DeclareMathAccent{\widecheck}{0}{mathx}{"71}
\newcommand{\tw}{\textwidth}
\renewcommand{\kill}[1]{}
\newcommand{\dummy}[1]{\mbox{}}
\newcommand{\xequal}[2][]{\ext@arrow 0055{\equalfill@}{#1}{#2}}
\def\equalfill@{\arrowfill@\Relbar\Relbar\Relbar}
\newcommand{\mto}{\mapsto}
\newcommand{\1}{\ensuremath{\ol{\mathrm{P}}}}
\renewcommand{\k}{\ensuremath{\ol{\mathrm{P}}}}
\newcommand{\hl}{\hline}
\newcommand{\h}{\hline}
\renewcommand{\k}[1]{\ensuremath{\left({#1}\right)}}
\newcommand{\ds}{\dots}
\newcommand{\bca}{\begin{cases}}
\newcommand{\eca}{\end{cases}}
\newcommand{\A}{\mathcal{A}}
\newcommand{\ff}[2]{\ensuremath{\di\fr{#1}{#2}}}
\newcommand{\s}[1]{\ensuremath{\di\int{#1}\,dx}}
\newcommand{\bpic}{\begin{picture}}\newcommand{\epic}{\end{picture}}
\newcommand{\beda}{\begin{edaenumerate}}
\newcommand{\eeda}{\end{edaenumerate}}
\newcommand{\cd}{\cdots}
\newcommand{\st}{\strut}
\newcommand{\mst}{\mathstrut}
\newcommand{\q}{\quad}
\newcommand{\bq}{\begin{quote}}\newcommand{\eq}{\end{quote}}
\newcommand{\vi}{\\[.1in]}
\newcommand{\be}{\begin{enumerate}}\newcommand{\ee}{\end{enumerate}}
\newcommand{\bce}{\begin{center}}\newcommand{\ece}{\end{center}}
\newcommand{\bde}{\begin{description}}\newcommand{\ede}{\end{description}}
\newcommand{\bri}{\begin{flushright}}\newcommand{\eri}{\end{flushright}}
\newcommand{\bb}{\begin{block}}\newcommand{\eb}{\end{block}}
\newcommand{\bt}{\begin{thm}}\newcommand{\et}{\end{thm}}
\newcommand{\bpf}{\begin{proof}}\newcommand{\epf}{\end{proof}}
\newcommand{\bex}{\begin{ex}}\newcommand{\eex}{\end{ex}}
\newcommand{\bexr}{\begin{exr}}\newcommand{\eexr}{\end{exr}}
\newcommand{\bft}{\begin{fact}}\newcommand{\eft}{\end{fact}}
\newcommand{\brk}{\begin{rmk}}\newcommand{\erk}{\end{rmk}}
\newcommand{\ba}{\begin{align*}}\newcommand{\ea}{\end{align*}}
\newcommand{\bexe}{\begin{exe}}\newcommand{\eexe}{\end{exe}}
\newcommand{\tn}{\textnormal}
\newcommand{\bit}{\begin{itemize}}\newcommand{\eit}{\end{itemize}}
\newcommand{\bcm}{}
\newcommand{\ol}{\overline}\newcommand{\ul}{\underline}
\newcommand{\hf}{\hfill}
\newcommand{\fr}{\frac}
\newcommand{\bd}{\begin{defn}}\newcommand{\ed}{\end{defn}}
\newcommand{\bp}{\begin{prop}}\newcommand{\ep}{\end{prop}}
\newcommand{\eh}{\emph}
\newcommand{\sub}{\subseteq}
\newcommand{\lam}{\lambda}
\newcommand{\fb}{\fbox}
\newcommand{\mb}{\mbox}
\newcommand{\te}{\text}\newcommand{\ph}{\phantom}
\newcommand{\wt}{\widetilde}
\renewcommand{\l}{\left}
\newcommand{\leftexp}[2]{{\vphantom{#2}}^{#1}{#2}}
\newcommand{\di}{\displaystyle}\renewcommand{\a}{\ensuremath{\bm{a}}}
\renewcommand{\b}{\ensuremath{\bm{b}}}
\renewcommand{\b}{\beta}
\renewcommand{\a}{\alpha}
\renewcommand{\int}{\in T}
\renewcommand{\s}{\sigma}
\theoremstyle{definition}
\newtheorem{thm}{Theorem}[section]
\newtheorem{lem}[thm]{Lemma}
\newtheorem{prop}[thm]{Proposition}\newtheorem{cor}[thm]{Corollary}
\newtheorem{exr}[thm]{Exercise}
\newtheorem{open}[thm]{Open Problem}
\newtheorem{ex}[thm]{Example}
\newtheorem{defn}[thm]{Definition}\newtheorem{rmk}[thm]{Remark}
\newtheorem*{notn}{Notation}\newtheorem{fact}[thm]{Fact}
\newtheorem{block}[thm]{}
\newtheorem*{exe}{Exercise}
\newtheorem*{thm1}{Theorem \ref{mth0}}
\title[weighted counting of inversions on ASMs]{weighted counting of inversions on alternating sign matrices}
\author[Masato Kobayashi]{Masato Kobayashi$^{*}$}
\date{\today}
\address{Department of Engineering\\
Kanagawa University, 3-27-1 Rokkaku-bashi, Yokohama 221-8686, Japan.}
\keywords{alternating sign matrix, bigrassmannian permutation, 
Bruhat order, inversion. 
}
\thanks{*Department of Engineering, Kanagawa University, Japan}
\thanks{This article is to appear in Order.}
\subjclass[2010]{Primary:15B36;\,Secondary:05A05, 05B20, 11C20.}
\email{masato210@gmail.com}
\begin{document}
\newcommand{\self}{\tn{side}_{w}}
\newcommand{\md}{\tn{mid}}
\newcommand{\ilj}{\ensuremath{\leftexp{I}{\el}^{J}}}
\newcommand{\ipj}{\ensuremath{\leftexp{I}{\mathstrut P}^{J}}}
\newcommand{\pd}{P_{\down}}
\begin{abstract} 
We generalize the author's formula (2011) on weighted counting of inversions on permutations to one on alternating sign matrices. The proof is based on the sequential construction of alternating sign matrices from the unit matrix which essentially follows from 
the earlier work of Lascoux-Sch\"{u}tzenberger (1996).
\end{abstract}
\maketitle
\tableofcontents

\section{Introduction}
\subsection{Alternating sign matrices and others}

\emph{Alternating sign matrices} (ASMs) are one of the most important  topics in combinatorics. There is the long story \cite{bressoud} to the proofs by Zeilberger and Kuperberg (both 1996) on the total number of ASMs of size $n$ being 
\[
\prod_{i=0}^{n-1}\ff{(3i+1)!}{(n+i)!}.
\]
As Propp mentioned \cite{propp}, there are actually many combinatorial objects which are equinumerous with ASMs (Figure \ref{f1}):
\begin{itemize}
	\item Monotone triangle
	\item DPP (Descending plane partition)
	\item Square ice
	\item FPL (Full packing of loops)
\end{itemize}
These are numbers in an array or graphs on a grid satisfying certain conditions (we omit details). Among these, we show that entries $\{-1, 0, 1\}$ of ASMs
are numerically meaningful with the connection to its poset structures 
and \eh{inversions} often discussed on permutations.

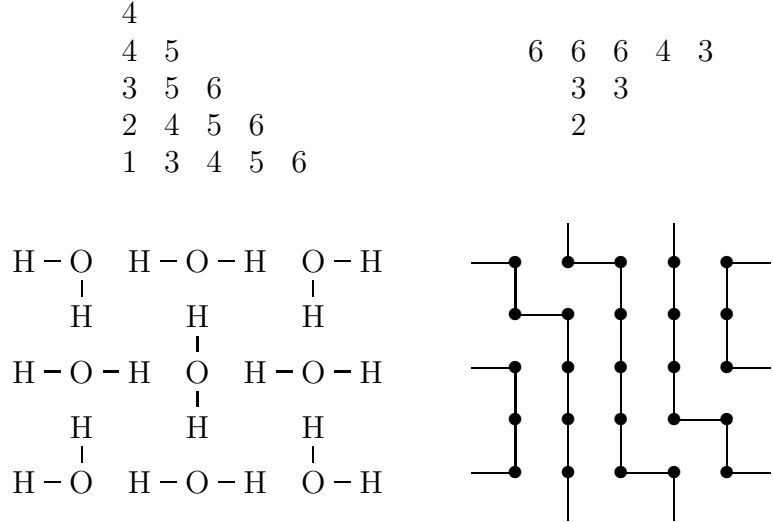
\begin{figure}
\caption{Monotone triangle, DPP, square ice, FPL}
\label{f1}
\begin{center}
\begin{tabular}{cccccccc}
&&\\
\begin{minipage}[c]{.3\tw}
\mb{}\hf$\begin{array}{cccccccc}
	4&\\
	4&5    \\
	3& 5  &6\\
	2& 4  &5&6\\
	1& 3  &4&5&6\\
\end{array}$\hf\mb{}
\end{minipage}
	&
\begin{minipage}[c]{.3\tw}
\mb{}\hf$\begin{array}{cccccccc}
	6&6   &6 &4&3  \\
	&3   &3   \\
	& 2  &   
\end{array}$\hf\mb{}
\end{minipage}
	\\[.5in]
\begin{minipage}[c]{.3\tw}
\xymatrix@=2mm{
*+{\te{H}}\ar@{-}[r]&*+{\te{O}}\ar@{-}[d]&
*+{\te{H}}\ar@{-}[r]&*+{\te{O}}\ar@{-}[r]&
*+{\te{H}}&*+{\te{O}}\ar@{-}[r]\ar@{-}[d]&*+{\te{H}}&\\
&*+{\te{H}}&&*+{\te{H}}\ar@{-}[d]&&
*+{\te{H}}&\\
*+{\te{H}}\ar@{-}[r]&*+{\te{O}}\ar@{-}[r]&
*+{\te{H}}&*+{\te{O}}\ar@{-}[d]&*+{\te{H}}\ar@{-}[r]
&*+{\te{O}}\ar@{-}[r]&*+{\te{H}}\\
&*+{\te{H}}\ar@{-}[d]&&*+{\te{H}}&&*+{\te{H}}&\\
*+{\te{H}}\ar@{-}[r]&*+{\te{O}}&
*+{\te{H}}\ar@{-}[r]&
*+{\te{O}}\ar@{-}[r]&
*+{\te{H}}&
*+{\te{O}}\ar@{-}[r]\ar@{-}[u]&
*+{\te{H}}
}	
\end{minipage}
	&
\begin{minipage}[c]{.3\tw}
	\xymatrix@=4.5mm{
&&&&&&\\
&*-{\bullet}\ar@{-}[l]\ar@{-}[d]&*-{\bullet}\ar@{-}[u]\ar@{-}[r]&*-{\bullet}\ar@{-}[d]
&*-{\bullet}\ar@{-}[u]\ar@{-}[d]&*-{\bullet}\ar@{-}[r]\ar@{-}[d]&\\
&*-{\bullet}&*-{\bullet}\ar@{-}[l]\ar@{-}[d]&*-{\bullet}&*-{\bullet}&*-{\bullet}&\\
&*-{\bullet}\ar@{-}[l]\ar@{-}[d]&*-{\bullet}&*-{\bullet}\ar@{-}[u]\ar@{-}[d]&*-{\bullet}\ar@{-}[u]\ar@{-}[d]
&*-{\bullet}\ar@{-}[u]\ar@{-}[r]&\\
&*-{\bullet}&*-{\bullet}\ar@{-}[u]\ar@{-}[d]&*-{\bullet}&*-{\bullet}&*-{\bullet}\ar@{-}[l]\ar@{-}[d]&\\
&*-{\bullet}\ar@{-}[l]\ar@{-}[u]&*-{\bullet}\ar@{-}[d]&*-{\bullet}\ar@{-}[r]\ar@{-}[u]&
*-{\bullet}\ar@{-}[d]&*-{\bullet}\ar@{-}[r]&\\
&&&&&&\\
}
\end{minipage}
\\		
\end{tabular}
\end{center}
\end{figure}



{\renewcommand{\arraystretch}{2}
\begin{table}[h!]
\caption{Inversion and $\beta$}
\label{t1}
\begin{center}
	\begin{tabular}{|c|c|c|ccccc}\h
	&permutation $w\in S_{n}$	&ASM $A\in \A_{n}$	\\\h
inversion	&
\rule{0pt}{25pt}\dep{25pt}
$I(w)=\dsum_{i<j, w(i)>w(j)}1$
	&$I(A)=\dsum_{i<j, k<l}a_{jk}a_{il}$
		\\\h
$\beta$	&\rule{0pt}{25pt}\dep{25pt}
	$\beta(w)=\di\sum_{i<j, w(i)>w(j)}(w(i)-w(j))$ &	
	See Theorem \ref{mth0} 
	and Remark \ref{this}
	\\\h
\end{tabular}
\end{center}
\end{table}}

\subsection{Main result}

We say $(i, j)$ is an \emph{inversion} of a permutation $w$ of $\{1, 2, \ds, n\}$ if $i<j$ and $w(i)>w(j)$. 
The \emph{inversion number} of $w$ is the number of such pairs, i.e., 
\[
I(w)=|\{(i, j)\mid i<j \te{ and } w(i)>w(j)\}|.
\]
This idea plays a key role on \eh{Bruhat order} in the theory of Coxeter groups (which has also a wide variety of applications in combinatorics and other areas). 
\begin{defn}
Define \eh{Bruhat order} $v\le w$ in $S_{n}$ (the symmetric group) if 
there exists a sequence of permutations 
$v=v_{0}, v_{1}, \ds, v_{m}=w$ such that 
$v_{k+1}=v_{k}t_{i_{k}j_{k}}$ ($t_{i_{k}j_{k}}$ a  transposition) 
and $I(v_{k})<I(v_{k+1})$ for each $k$. Say a permutation $v\in S_{n}$ is \emph{bigrassmannian} if there exists a unique pair $(i, j)$ such that $v^{-1}(i)>v^{-1}(i+1)$ and $v(j)>v(j+1)$. 
Define $\beta:S_{n}\to \{0, 1, 2, \ds\}$ by 
\[
\beta(w)=|\{v\in S_{n} \mid v\le w \te{ and } v \te{ is bigrassmannian}\}|.
\]
\end{defn}
The author \cite[Theorem]{kob} showed that 
\[
\beta(w)=\di\sum_{i<j, w(i)>w(j)}(w(i)-w(j))
\]
for all $w\in S_{n}$. 
We can interpret this formula as weighted counting of inversions on permutations. 
We can extend this statistic $\beta$ for alternating sign matrices as follows: 
\bd{Let $A=(a_{ij})$ be a square matrix of size $n$. We say that $A$ is an \emph{alternating sign matrix} (ASM) if
for all $i, j$, 
we have 
\[\begin{array}{lll}\di a_{ij}\in \{-1, 0, 1\},  &   &\di\sum_{k=1}^j a_{ik}\in \{0, 1\},   \vi \di\sum_{k=1}^i a_{kj}\in \{0, 1\} &\te{ and }   &  \di\sum_{k=1}^n a_{ik}= \sum_{k=1}^n a_{kj}=1.\end{array}\]

Denote by $\A_n$ the set of all alternating sign matrices of size $n$.
}\ed
There is a well-known identification of permutations and permutation matrices:
 For $w\in S_{n}$, let 
\[
w_{ij}=\delta_{jw(i)}=
\begin{cases}
	1&j=w(i),\\
	0& \tn{ otherwise.}\\
\end{cases}
\]
Then, $w=(w_{ij})_{i, j=1}^{n}$ is an ASM with entries only $\{0, 1\}$. Under this identification, we can naturally think $S_{n}\sub \A_{n}$.
\bd{The \emph{corner sum matrix} of $A\in \A_n$ is the $n$ by $n$ matrix $\wt{A}$ defined by 
\[
\wt{A}(i, j)=\sum_{p\le i, q\le j}a_{pq}\]
for all $i, j$. 
Define \emph{ASM order} $A\le B$ on $\A_n$ if $\wt{A}(i, j)\ge \wt{B}(i, j)$ for all $i, j$.
}\ed

\begin{figure}
\caption{$(\mathcal{A}_3, \le)$ with 4 bigrassmannian permutations} 
\label{a3}
\[
\xymatrix@C=13mm{
&*+[F]{\left(\begin{array}{ccc}0  &0   &1   \\ 0 & 1  & 0  \\1  & 0  &0  \end{array}\right)}
\ar@{-}[dl]\ar@{-}[dr]
&\\
*{
\fboxrule2pt\fcolorbox[gray]{0}{0.95}{
$\left(\begin{array}{ccc}0 &1   &0   \\0  &0   &1   \\ 1 & 0  &0  \end{array}\right)$
}
}
\ar@{-}[dr]
&&*
{
\fboxrule2pt\fcolorbox[gray]{0}{0.95}
{
$\left(\begin{array}{ccc}0 &0   &1   \\ 1 &0  &0   \\0  & 1  &0  \end{array}\right)$
}
}\ar@{-}[dl]
\\
&*+[F]{\left(\begin{array}{ccc} 0 &1   &0   \\1  &-1   &1   \\0 &1   &0  \end{array}\right)}
\ar@{-}[dl]\ar@{-}[dr]
\\
*{
\fboxrule2pt\fcolorbox[gray]{0}{0.95}{
$\left(\begin{array}{ccc} 1 & 0  &0   \\ 0 &0  &1   \\0  &1   &0  \end{array}\right)$
}
}
\ar@{-}[dr]
&&*{
\fboxrule2pt\fcolorbox[gray]{0}{0.95}{
$\left(\begin{array}{ccc}  0& 1  &0  \\ 1 & 0  &0   \\0  &0   &1  \end{array}\right)$
}
}
\ar@{-}[dl]
\\
&*+[F]{\left(\begin{array}{ccc}  1& 0  &0   \\0  &1   &0   \\ 0 &0   &1  \end{array}\right)}
&
}\]
\end{figure}
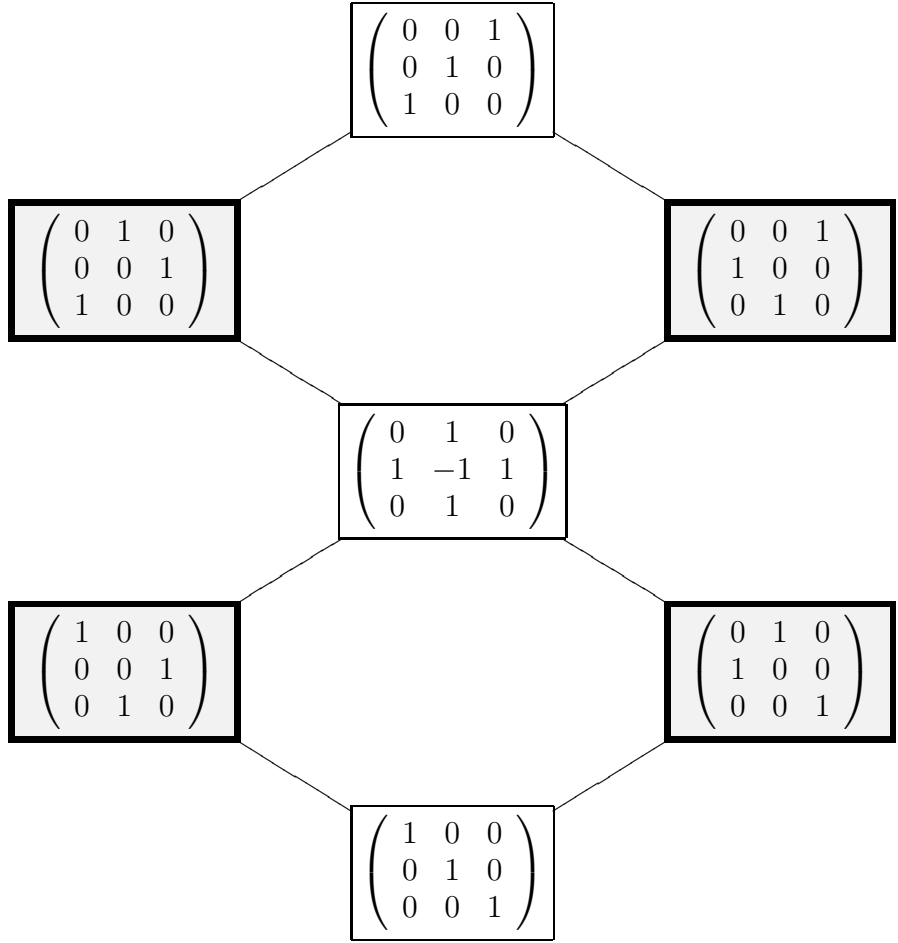

This is the traditional way to introduce a partial order onto ASMs; for example, Figure \ref{a3} shows this order on seven ASMs in $\A_3$.

Recall from the poset theory that 
$y$ \eh{covers} $x$ in a poset (write $x\lhd y$) if 
$x\lneq y$ and $x\le z\le y$ implies $z\in\{x, y\}$.

\begin{defn}
Say $A\in \A_{n}$ is \eh{join-irreducible} if it covers exactly one element in $(\A_{n}, \le)$. 
\end{defn}

It is the fact that $A$ is bigrassmannian 
if and only if it is join-irreducible in $(\A_{n}, \le)$. 
For such details on bigrassmannian permutations, see Geck-Kim \cite{geck}, Lascoux-Sch\"{u}tzenberger \cite{ls} and Reading \cite{reading}; more recent is 
Engbers-Hammet \cite{engbers}.

 Note that $(\A_{n}, \le)$ is a poset containing 
$(S_{n}, \le)$ as a subposet; 
to be more precise, 
$(\A_n, \le)$ is a finite distributive lattice 
as the MacNeille completion of Bruhat order (the smallest lattice which contains $(S_{n}, \le)$), 
the poset of all order ideals on the set of  bigrassmannian permutations as Lascoux-Sch\"{u}tzenberger \cite{ls} showed.
Recall from the lattice theory that every finite distributive $L$ is graded with the rank function
\[
r(x)=|\{j\in L\mid j\le x, \,\te{$j$ is join-irreducible}\}| \text{ for} \,\, x\in L.
\]
Now, what can we say about this rank function 
for $(\A_n, \le)$ as the MacNeille completion of the graded poset $(S_{n}, \le, I)$? This motivates us to study the following function:
\begin{defn}
\[
\beta(B)=|\{A\in \A_{n}\mid A\le B \text{ and } A \te{ is bigrassmannian (join-irreducible)}\}|.
\]
\end{defn}
It is thus natural to ask if there exists a similar formula in terms of inversions (the bottom-right part of Table \ref{t1}). The answer is yes. 
Under the identification $w\leftrightarrow (w_{ij})_{i, j=1}^{n}$, we can rewrite the formulas in Table \ref{t1} as \begin{align*}
	I(w)&=\dsum_{i<j, w(i)>w(j)}w_{iw(i)}w_{jw(j)}=\sum_{i<j, k<l}w_{jk}w_{il},
	\\\bw&=\dsum_{i<j, w(i)>w(j)}(w(i)-w(j))=
	\sum_{i<j, k<l}(l-k)w_{jk}w_{il}.
\end{align*}
\begin{thm}
\label{mth0}
For each ASM $A$, $\beta(A)$ is equal to 
the total sum of the weight of its inversions:
\[
\beta(A)=\dsum_{i<j, k<l}(l-k)a_{jk}a_{il}.
\]
\end{thm}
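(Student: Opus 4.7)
The plan is to prove Theorem \ref{mth0} by induction on the rank of $A$ in the distributive lattice $(\A_n, \le)$. By the Lascoux--Sch\"{u}tzenberger theorem recalled above, this lattice is the MacNeille completion of Bruhat order, its join-irreducibles are exactly the bigrassmannian ASMs, and the rank function of any finite distributive lattice sends $x$ to the number of join-irreducibles below it; hence $\beta$ \emph{is} the rank function of $(\A_n, \le)$, and in particular $\beta(B) = \beta(A) + 1$ for every cover $A \lhd B$. The base case $A = I$ is immediate: the identity has nonzero entries only on the diagonal, so any product $a_{jk} a_{il}$ with $i < j$ would force $k = j > i = l$, contradicting $k < l$; thus the right-hand side vanishes, matching $\beta(I) = 0$.

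For the inductive step I would analyse a single cover $A \lhd B$. The covers in ASM order are well known (cf.\ Lascoux--Sch\"{u}tzenberger \cite{ls}) to be single-entry decrements of the corner-sum matrix: there is a unique interior $(u, v)$ with $\wt{B}(u, v) = \wt{A}(u, v) - 1$ and $\wt{B} = \wt{A}$ elsewhere. Inverting $a_{pq} = \wt{A}(p,q) - \wt{A}(p-1,q) - \wt{A}(p, q-1) + \wt{A}(p-1, q-1)$ shows that $\Delta := B - A$ is supported on the $2 \times 2$ block at rows $\{u, u+1\}$ and columns $\{v, v+1\}$ with pattern $\begin{smat}-1 & 1 \\ 1 & -1\end{smat}$. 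Moreover, forcing both $A$ and $B$ to satisfy the ASM partial-sum conditions pins down the cover constraints $\sum_{q \le v} a_{u, q} = 1$ and $\sum_{q \le v} a_{u+1, q} = 0$.

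It then remains to verify that the weighted-inversion sum on the right-hand side also jumps by exactly $1$ under this move. Expanding
\[
b_{jk} b_{il} - a_{jk} a_{il} = \Delta_{jk}\, a_{il} + a_{jk}\, \Delta_{il} + \Delta_{jk}\, \Delta_{il},
\]
the purely quadratic piece contributes $1$ from the unique admissible configuration $(j, k, i, l) = (u+1, v, u, v+1)$. Each of the two mixed pieces is a sum of four contributions (one per block position), and a telescoping in the row/column directions collapses them respectively to $\sum_{s > v} a_{u, s}$ and $\sum_{r \le v} a_{u+1, r}$; by the cover partial-sum identities just derived, together with the full row sum $\sum_s a_{u, s} = 1$, both sums vanish. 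Hence the net change is $1$, matching $\beta(B) - \beta(A)$, and the induction closes.

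The main obstacle I anticipate is the telescoping step: each mixed piece splits into four overlapping sub-sums with shifted row/column ranges and weights $s - v$ versus $s - v - 1$, and one must group them carefully to see the clean collapse to a single pair of partial row sums. Once that bookkeeping is done, matching against the ASM cover conditions is immediate, and no further case analysis is required.
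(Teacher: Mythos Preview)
Your proof is correct and follows the same overall induction-on-rank strategy as the paper: establish the base case $A = I_n$ and then show that the weighted-inversion sum increases by exactly $1$ along any cover $A \lhd B$, matching the known increment of $\beta$.

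Where you diverge is in the bookkeeping for that increment. The paper expands $J(B) - J(A)$ directly in the entries of $A$ and $B$ and partitions the index set into eight positional cases $J_0,\dots,J_7$ according to where $(i,l)$ and $(j,k)$ sit relative to the $2\times 2$ block $R$; five of these vanish, and the surviving three give $J_1 + J_4 + J_7 = \sum_{k<s} b_{r+1,k} + \sum_{l>s+1} b_{rl} + (b_{r+1,s}b_{r,s+1} - a_{r+1,s}a_{r,s+1})$. Because each summand here depends on the specific local pattern, the paper then finishes with a type-by-type check against all sixteen possible $2\times 2$ configurations (Table~\ref{d2}) to confirm the total is always $1$. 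Your bilinear expansion $bb' - aa' = \Delta a' + a\Delta' + \Delta\Delta'$ is a genuinely cleaner organization of the same computation: the purely quadratic piece equals $1$ for structural reasons (it depends only on $\Delta$, not on which of the sixteen types occurs), and the two mixed pieces telescope to the partial row sums $\sum_{l>v} a_{u,l}$ and $\sum_{k\le v} a_{u+1,k}$, which the ASM partial-sum constraints force to $0$ uniformly, with no reference to the type. So you eliminate the terminal sixteen-case analysis entirely. The telescoping you flag as the ``main obstacle'' does require careful grouping (pairing the $(u,\cdot)$ with the $(u+1,\cdot)$ block positions first to strip the row index, then collapsing the shifted weights $l-v$ and $l-v-1$), but it goes through exactly as you describe; the payoff is a shorter and more conceptual argument.
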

After the proof, we will see several consequences in Section \ref{rmks} with some open problems and more ideas; again, Bruhat order and ASMs have been of great importance as application of lattice, group and matrix theory.
With our results, we wish to contribute to development of 
further research in those areas.

\begin{rmk}\label{this}
This formula includes $O(n^{4})$ terms while 
Brualdi-Schroeder \cite[Theorem 3.1]{bs} implicitly gives the alternate formula for $\beta$ (which they called Bruhat-rank) with $O(n^{2})$ terms:
\[
\b(A)=\sum_{i, j=1}^{n}(\delta_{ij}-a_{ij})(n-i+1)(n-j+1).
\]
\end{rmk}

\subsection{Summary of this paper}
In Section 2, we recall the idea of inversions for ASMs.
In Section 3, we give details of covering relations of 
ASMs which will be the key idea to prove Theorem \ref{mth0}.
In Section 4, we give its complete proof. 
In Section 5, we discuss some consequences and open problems for our future research.


\section{Inversions for alternating sign matrices}



As mentioned in Introduction, say $(i, j)$ is an inversion of $w\in S_{n}$
if $i<j$ and $w(i)>w(j)$. 
Let us generalize this idea for ASMs.
\begin{defn}
We say that $(i, j, k, l)$ is an \eh{inversion} of $A$ if 
$i<j, k<l$ and $a_{jk}a_{il}\ne 0$. 
Also, let us say that $l-k$ is the \eh{weight} of this inversion.
Define the \emph{inversion number} of $A$ by
\[
I(A)=\sum_{
\substack{i<j, k<l,\\
a_{jk}a_{il}\ne0}
}a_{jk}a_{il}.
\]
\end{defn}
\begin{rmk}
Note that it makes no difference to include zero terms into the sum:
\[
I(A)=\sum_{
{\substack i<j, k<l}
}a_{jk}a_{il}.
\]
Sometimes this expression is more convenient. 

Positions of such two entries look like this:
\[
\left(\begin{array}{cccccc}
		&   &   &  \\
		&   &   &  a_{il}\\
		&   &    \reflectbox{$\ddots$} &&\\
		&   a_{jk}&   &  \\
		&   &   &  \\
\end{array}\right)		
\]
\end{rmk}

\begin{ex}
Let 
$A=\left(\begin{array}{cccc}
0&0&1&0\mst\\
0&1&-1&1\mst\\
1&-1&1&0\mst\\
0&1&0&0\mst\\
\end{array}\right)$.
Then, observe that 
\begin{align*}
	I(A)&=a_{31}(a_{22}+a_{13}+a_{23}+a_{24})
+a_{22}a_{13}+a_{32}(a_{13}+a_{23}+a_{24})
	\\&\ph{=}+a_{42}(a_{13}+a_{23}+a_{33}+a_{24})
+a_{33}a_{24}
	\\&=2+1-1+2+1=5.
\end{align*}
\end{ex}

\begin{notn}
For convenience, we will write $I(A, B)=I(B)-I(A)$ and $\beta(A, B)=\beta(B)-\beta(A)$ etc. whenever $A\le B$.
\end{notn}

%


\section{Covering relations of ASMs}

\newcommand{\half}{\ensuremath{\ff{1}{\,2\,}}}
\newcommand{\nhalf}{\ensuremath{-\ff{1}{\,2\,}}}

We now describe details of the order structure of ASMs; roughly speaking, starting with the unit matrix, every ASM can be constructed  by ``locally exchanging" exactly one consecutive minor of size 2. Each exchange edge is labeled by \eh{exchanging positions} and \eh{type} 1-16 according to those eight entries. 
Although this sequential construction of ASMs 
essentially follows from the earlier work of 
Lascoux-Sch\"{u}tzenberger \cite{ls} in 1996, 
it was only recently explained in Brualdi-Kiernan-Meyer-Schroeder \cite[Theorem 6.3]{bkms} and the author \cite[Proposition 3.15]{kob2} in 2017 independently. 

First, for convenience, let $1\le r, s\le n-1$ and 
\[
R=\{(r, s), (r, s+1), (r+1, s), (r+1, s+1)\}
\]
be four positions in a matrix.
\begin{lem}
\label{answer}
Let $A, B\in \A_n$.
Then, $A\lhd B$ if and only if 
there exists a unique position $(r, s)$ 
(\eh{exchange positions}) such that 
the entries at $(r, s), (r, s+1), (r+1, s), (r+1, s+1)$ positions of $A, B$
satisfy 
\[
\left(\begin{array}{cc}b_{rs}& b_{r, s+1} \\b_{r+1, s}  &b_{r+1, s+1}  \end{array}\right)-
\left(\begin{array}{cc}a_{rs}& a_{r, s+1} \\a_{r+1, s}  &a_{r+1, s+1}  \end{array}\right)
=
\left(\begin{array}{cc}-1& 1 \\1  &-1  \end{array}\right)\]
 (as listed in Table \ref{d2}) and moreover, 
 $a_{pq}=b_{pq}$ whenever $(p, q)\not\in R$. In this case, necessarily $\beta(A, B)=1$.
\end{lem}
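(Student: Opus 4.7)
The plan is to analyze both directions through the corner sum matrix, using the translation $A \le B \iff \wt{A}(p,q) \ge \wt{B}(p,q)$ for all $(p,q)$, together with the fact that for any ASM the corner sums vary by $0$ or $1$ between consecutive entries in a row or column.

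For the ($\Leftarrow$) direction, I would first compute by telescoping that if $B - A$ equals the $2\times 2$ exchange matrix $\left(\begin{smallmatrix}-1&1\\1&-1\end{smallmatrix}\right)$ concentrated at position $(r,s)$ and vanishes elsewhere, then $\wt{B}(p,q) = \wt{A}(p,q)$ for every $(p,q) \ne (r,s)$, while $\wt{B}(r,s) = \wt{A}(r,s) - 1$; in particular $A < B$. That $B$ is an ASM is the content of the case-by-case enumeration in Table \ref{d2}, which tells us exactly when the prescribed $2\times 2$ block can be added without creating a value outside $\{-1,0,1\}$ or violating the partial-sum conditions. To upgrade $A < B$ to a covering relation, observe that any $C \in \A_n$ with $A \le C \le B$ satisfies $\wt{A}(p,q) \ge \wt{C}(p,q) \ge \wt{B}(p,q)$ pointwise, which pins $\wt{C}$ to $\wt{A}$ away from $(r,s)$ and confines $\wt{C}(r,s)$ to $\{\wt{A}(r,s), \wt{A}(r,s) - 1\}$; hence $C \in \{A,B\}$.

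For the ($\Rightarrow$) direction, assume $A \lhd B$ and set $S = \{(p,q) : \wt{A}(p,q) > \wt{B}(p,q)\}$, which is nonempty. My plan is to produce a position $(r,s) \in S$ at which decrementing $\wt{A}$ by $1$ yields another valid ASM corner sum matrix, thereby giving an ASM $C$ with $\wt{A} \ge \wt{C} \ge \wt{B}$ and $C \ne A$. The covering hypothesis then forces $C = B$, so $S = \{(r,s)\}$ with $\wt{A}(r,s) - \wt{B}(r,s) = 1$, and reconstructing the entrywise differences from corner sums (as second differences) recovers the pattern $B - A = \left(\begin{smallmatrix}-1&1\\1&-1\end{smallmatrix}\right)$ supported on the $2\times 2$ block at $(r,s)$, matching one of the $16$ cases of Table \ref{d2}; uniqueness of $(r,s)$ is immediate since a single cell of $\wt{A} - \wt{B}$ is nonzero.

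The main obstacle is locating such $(r,s)$: the decrement is compatible with the $\{0,1\}$-step structure precisely when $\wt{A}(r-1,s) = \wt{A}(r,s-1) = \wt{A}(r,s) - 1$ and $\wt{A}(r+1,s) = \wt{A}(r,s+1) = \wt{A}(r,s)$ all hold. My plan is to pick $(r,s) \in S$ maximizing $\wt{A} - \wt{B}$ and then breaking ties lexicographically, so that $(r+1,s)$ and $(r,s+1)$ lie outside $S$ while a strict decrease of $\wt{A}$ is forced to the north and west; the $\{0,1\}$-step structure of both $\wt{A}$ and $\wt{B}$ then rules out the remaining obstructions cell by cell and in particular yields $\wt{A}(r,s) - \wt{B}(r,s) = 1$. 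Once located, the four local conditions on $\wt{A}$ translate back to one of the $16$ entry patterns of $A$ in Table \ref{d2}, so the exchange is valid. Finally, $\beta(A,B) = 1$ is automatic: as recalled in the Introduction, $(\A_n, \le)$ is a finite distributive lattice whose join-irreducibles are the bigrassmannian ASMs, so the lattice's rank function counts join-irreducibles below and coincides with $\beta$; covering relations therefore increase $\beta$ by exactly $1$.
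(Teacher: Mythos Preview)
Your $(\Leftarrow)$ direction is clean and correct. The paper itself does not prove this lemma directly; it simply cites \cite[Key Lemma 3.18 and Proposition 3.23]{kob2}. So your direct corner-sum argument is a genuinely different route, and the idea of locating a single position at which $\wt{A}$ can be decremented is exactly the right one for $(\Rightarrow)$.

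However, your specific selection rule for $(r,s)$ has a gap. Maximizing $d=\wt{A}-\wt{B}$ and taking the lexicographically largest maximizer does force $d(r+1,s)<d(r,s)$ and $d(r,s+1)<d(r,s)$, which (via the $\{0,1\}$-step property of both $\wt{A}$ and $\wt{B}$) gives your conditions $\wt{A}(r+1,s)=\wt{A}(r,s)$ and $\wt{A}(r,s+1)=\wt{A}(r,s)$. But it does \emph{not} force the north/west strict decreases $\wt{A}(r-1,s)=\wt{A}(r,s)-1$ and $\wt{A}(r,s-1)=\wt{A}(r,s)-1$: if, say, the first fails, the step constraints only yield $d(r-1,s)=d(r,s)$, which is compatible with $(r,s)$ being lex-maximal since $(r-1,s)$ is lex-smaller. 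A concrete failure in $\A_4$: take $A=2431$ and $B=4321$. Then $d$ equals $1$ exactly at $(1,2),(1,3),(2,2)$ and $0$ elsewhere, so your rule selects $(r,s)=(2,2)$; but $\wt{A}(2,2)=\wt{A}(1,2)=1$, so decrementing $\wt{A}(2,2)$ produces a column increment of $-1$ and hence not an ASM corner-sum matrix. (The position $(1,2)$ \emph{does} work here.) One repair is to iterate: whenever the north condition fails at your current $(r,s)$, pass to $(r-1,s)$, which is again a $d$-maximizer; this must terminate before row $0$ since $d$ vanishes there, and the terminal point satisfies the north condition while still satisfying the south one. A symmetric sweep handles the east/west pair, but you must argue the two sweeps do not undo each other; alternatively, one can bypass this by invoking the distributive-lattice structure of corner-sum arrays (monotone triangles under componentwise order), where covers automatically change a single coordinate by $1$.
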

\begin{lem}
Let $I_{n}$ be the unit matrix of size $n$.
Then, for every $A\in \A_{n}$, there is a 
sequence of ASMs $A_{0}, A_{1}, \ds, A_{\beta(A)}$ such that 
\[
I_{n}=A_{0}\lhd A_{1}\lhd A_{2}\lhd \cd \lhd A_{\beta(A)}=A.
\]
In particular, $(\A_{n}, \le, \b)$ is a graded poset with 
$\beta(I_{n})=0$.
\label{answer2}
\end{lem}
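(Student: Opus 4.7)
The plan is to bootstrap from Lemma \ref{answer}, which already encodes the heavy lifting: every covering relation in $(\A_n,\le)$ is local and contributes exactly $+1$ to $\beta$. Given this, the present lemma reduces to a short poset-theoretic argument.

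First I would verify that $I_n$ is the minimum element of $(\A_n,\le)$. A direct computation gives $\wt{I_n}(i,j)=\min(i,j)$, while for an arbitrary ASM $A$ the defining conditions $\sum_{q\le j}a_{pq}\in\{0,1\}$ (and similarly for column partial sums) force $\wt{A}(i,j)\le \min(i,j)$. Hence $\wt{I_n}(i,j)\ge \wt{A}(i,j)$ for every $(i,j)$, i.e., $I_n\le A$. Because $I_n$ is the unique minimum, it covers no element of $\A_n$; by the stated definition of join-irreducibility this means $I_n$ itself is not join-irreducible and no join-irreducible $A$ can satisfy $A\le I_n$. Therefore $\beta(I_n)=0$.

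Next, for arbitrary $A\in\A_n$ I would build a saturated chain $I_n=A_0\lhd A_1\lhd\cdots\lhd A_m=A$ by downward induction. Finiteness of $\A_n$ guarantees that whenever $A_k\ne I_n$ we can choose some $A_{k-1}\lhd A_k$ strictly below; iterating terminates, and by minimality it can only terminate at $I_n$. Applying Lemma \ref{answer} at every step gives $\beta(A_{k+1})-\beta(A_k)=1$, so telescoping with $\beta(I_n)=0$ yields $m=\beta(A)$. This is the required chain. Because the same telescoping argument applies to \emph{every} saturated chain from $I_n$ to $A$, all such chains share the common length $\beta(A)$, which simultaneously establishes that $(\A_n,\le)$ is graded with rank function $\beta$.

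The main obstacle is not in this lemma but in Lemma \ref{answer} itself: showing that every cover in $(\A_n,\le)$ must have the prescribed $2\times 2$ local form and precisely increments $\beta$ by one is the delicate combinatorial statement, relying on the MacNeille-completion structure of Lascoux--Sch\"utzenberger and the enumeration of bigrassmannians below an ASM. Once Lemma \ref{answer} is granted, the present statement is a formal consequence with no further combinatorial bookkeeping required.
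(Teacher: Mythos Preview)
Your argument is correct. The paper itself does not give an independent proof of this lemma: it simply records that Lemmas \ref{answer} and \ref{answer2} together are consequences of \cite[Key Lemma 3.18 and Proposition 3.23]{kob2}. Your write-up makes explicit the (short) poset-theoretic deduction of Lemma \ref{answer2} from Lemma \ref{answer}---verifying $I_n$ is the minimum via corner sums, noting $\beta(I_n)=0$, and telescoping $\beta$ along a saturated chain---which is exactly the intended logical dependence, and you correctly flag that all the combinatorial content lives in Lemma \ref{answer}.
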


\begin{proof}[Proof of Lemmas \ref{answer} and \ref{answer2}]
These are consequences of {\cite[Key Lemma 3.18 and Proposition 3.23]{kob2}}.
\end{proof}

Figure \ref{f2} shows all covering relations of $\A_{4}$;
\fboxrule2pt\fcolorbox[gray]{0}{0.85}{\ph{1243}}
indicates the 10 bigrassmannian permutations.

\begin{ex}
Let
$A=\left(\begin{array}{cccc}
0&0&1&0\mst\\
0&1&-1&1\mst\\
1&-1&1&0\mst\\
0&1&0&0\mst\\
\end{array}\right)$. 
There exist precisely 7 bigrassmannian (join-irreducible) permutations weakly below $A$:
\[
\{1342, 1423, 3124, 2314, 1243, 1324, 2134\}.
\]
So $\beta(A)=7.$
Next, let $B=3412$. This is bigrassmannian itself and 
\[
A=
\left(\begin{array}{cccc}
0&0&1&0\mst\\
0&\ul{\,1\mst\,}&\ul{\mst -1}&1\mst\\
1&\ul{-1\mst }&\ul{\,\mst 1\,}&0\mst\\
0&1&0&0\mst\\
\end{array}\right)\lhd 
\left(\begin{array}{cccc}
0&0&1&0\mst\\
0&\ul{\,\mst 0\,}&\ul{\,\mst 0\,}&1\mst\\
1&\ul{\,\mst 0\,}&\ul{\,\mst 0\,}&0\mst\\
0&1&0&0\mst\\
\end{array}\right)=B\]
is a type 4 covering relation so that $\beta(B)=8$.
\end{ex}

\section{Main theorem}
\renewcommand{\b}{\beta}
\fboxrule0.75pt

\begin{thm1}
\label{mth1}
For each ASM $B=(b_{ij})\in \A_{n}$, $\beta(B)$, the number of bigrassmannian permutations which are weakly below $B$ under ASM order, is equal to 
the total sum of the weight of its inversions:
\[
\b(B)=\sum_{i<j, k<l}(l-k)b_{jk}b_{il}.
\]
\end{thm1}


%

%

\begin{proof}
Let 
\[
J(B):=\dsum_{i<j, k<l}(l-k)b_{jk}b_{il}.
\]
Since $I_{n}$ has no inversion, we have $J(I_{n})=0\,\, (=\beta(I_{n}))$. 
Thanks to Lemma \ref{answer2}, it is enough to show that if $A\lhd B$, then $J(B)=J(A)+1$;
 then $\beta$ and $J$ satisfy exactly same recurrences so that 
$\beta(B)=J(B)$ for all $B$.

Now assume $A\lhd B$. As in Lemma \ref{answer}, say $(r, s)$ is the position such that 
\[
\left(\begin{array}{cc}b_{rs}& b_{r, s+1} \\b_{r+1, s}  &b_{r+1, s+1}  \end{array}\right)-
\left(\begin{array}{cc}a_{rs}& a_{r, s+1} \\a_{r+1, s}  &a_{r+1, s+1}  \end{array}\right)
=
\left(\begin{array}{cc}-1& 1 \\1  &-1  \end{array}\right)\]
and 
\[
a_{pq}=b_{pq} \te{ if } (p, q)\not\in R
\]
where $R=\{(r, s), (r, s+1), (r+1, s), (r+1, s+1)\}$.
We will carefully compute 
\[
J:=J(A, B)=\sum_{i<j, k<l}(b_{jk}b_{il}-a_{jk}a_{il})(l-k)
\]
with dividing it into 8 terms
\[
J=J_{0}+J_{1}+J_{2}+J_{3}+J_{4}+J_{5}+J_{6}+J_{7}
\]
as each $J_{i}$ introduced below
(We will see that $J_{0}=J_{2}=J_{3}=J_{5}=J_{6}=0$).

\begin{enumerate}
	\item[{(Case 0)}]
	If $(j, k), (i, l)\not\in R$, then 
$a_{il}=b_{il}, a_{jk}=b_{jk}$
so that 
\[
J_{0}:=\sum_{\substack{i<j, k<l\\
(j, k), (i, l)\not\in R}}
\underbrace{(b_{jk}b_{il}-a_{jk}a_{il})}_{0}(l-k)=0.
\]

	\item[{(Case 1)}] $j=r+1, 1\le k\le s-1$.
\newcommand{\Fb}[1]{{\fboxsep5pt{\fb{#1}}}}
\[
{\renewcommand{\arraystretch}{1.5}
\left(\begin{array}{ccc|cc}
		&   &   &  \\
		&   &a_{rs}&a_{r, s+1}   &  \\\hl
\st\fb{$a_{r+1, k}$}&\cd	&   a_{r+1, s}&a_{r+1, s+1}   &  \\
		&   &   &  \\
\end{array}\right)}
\]
\begin{align*}
	J_{1}&:=\sum_{k=1}^{s-1}
\k{
(b_{r+1, k}b_{rs}-a_{r+1, k}a_{rs})(s-k)
+
(b_{r+1, k}b_{r,s+1}-a_{r+1, k}a_{r, s+1})(s+1-k)
}
	\\&=\sum_{k=1}^{s-1}
\k{b_{r+1, k}\underbrace{(b_{rs}-a_{rs})}_{-1}(s-k)
+b_{r+1, k}\underbrace{(b_{r,s+1}-a_{r,s+1})}_{1}(s+1-k)}
	\\&=\sum_{k=1}^{s-1}b_{r+1, k}.
\end{align*}
	\item[{(Case 2)}]
 $r+2\le j\le n, 1\le k\le s-1$.
\[
{\renewcommand{\arraystretch}{1.5}
\left(\begin{array}{ccc|cc}
		&   &   &  \\
	&	&   a_{rs}&a_{r, s+1}   &  \\\hl
	&&   a_{r+1, s}&a_{r+1, s+1}   &  \\
		&
		  \reflectbox{$\ddots$} &   &  \\
\fb{$a_{jk}$}		&   &   &  \\
\end{array}\right)}
\]
\begin{align*}
	J_{2}&:=
	\sum_{
	{\substack {r+2\le j\le n\\1\le k\le s-1}}}
b_{jk}(b_{rs}(s-k)+b_{r, s+1}(s+1-k)+b_{r+1, s}(s-k)+b_{r+1, s+1}(s+1-k))
	\\&\ph{=}
	-\sum_{
	{\substack {r+2\le j\le n\\1\le k\le s-1}}}
	a_{jk}(a_{rs}(s-k)+a_{r, s+1}(s+1-k)+a_{r+1, s}(s-k)+
	a_{r+1, s+1}(s+1-k))
	\\&=\sum_{
	{\substack {r+2\le j\le n\\1\le k\le s-1}}}b_{jk}
	\k{-1(s-k)+1(s+1-k)+1(s-k)-1(s+1-k)}
	\\&=0.
\end{align*}
	\item[{(Case 3)}]
 $r+2\le j\le n, k=s$.
	\[
{\renewcommand{\arraystretch}{1.5}
\left(\begin{array}{ccc|cc}
		&   &   &  \\
		&   &a_{rs}&a_{r, s+1}   &  \\\hl
	&	&   a_{r+1, s}&a_{r+1, s+1}   &  \\
		&   &\vdots&   &  \\
		&   &\fb{$a_{js}$}&   &  \\
\end{array}\right)}
\]
\begin{align*}
	J_{3}&:=\sum_{j=r+2}^{n}
	\k{b_{js}(b_{r, s+1}+b_{r+1, s+1})
	-a_{js}(a_{r, s+1}+a_{r+1, s+1})}
	\\&	=\sum_{j=r+2}^{n}
	b_{js}(
	\underbrace{(b_{r, s+1}-a_{r, s+1})}_{1}
	+\underbrace{(b_{r+1, s+1}-a_{r+1, s+1})}_{-1}
	)
	\\&=0.
\end{align*}
By symmetry of rows and columns, we will get similar results in (Case 4) -- (Case 6):
	\item[{(Case 4)}]
 $i=r, s+2\le l\le n$.
\[
{\renewcommand{\arraystretch}{1.5}
\left(\begin{array}{ccc|ccc}
		&   &   &  \\
		&   &a_{rs}&a_{r, s+1}   & \dots&\fb{$a_{rl}$}\st
		\dep{10pt} \\\hl
	&	&   a_{r+1, s}&a_{r+1, s+1}   &  \\
\end{array}\right)}
\]
\begin{align*}
	J_{4}&:=
	\sum_{l=s+2}^{n}
	\k{b_{r+1, s}b_{rl}(l-s)+b_{r+1, s+1}b_{rl}(l-(s+1))}
	\\&\ph{=}
	-\sum_{l=s+2}^{n}
	\k{a_{r+1, s}a_{rl}(l-s)+a_{r+1, s+1}a_{rl}(l-(s+1))}
	\\&=\sum_{l=s+2}^{n}((l-s)-(l-s-1))b_{rl}
	=\sum_{l=s+2}^{n}b_{rl}.
\end{align*}	
	
	\item[{(Case 5)}]
 $1\le i\le r-1, s+2\le l\le n$.
	\[
{\renewcommand{\arraystretch}{1.5}
\left(\begin{array}{ccc|ccc}
		&   &   &&&  \fb{$a_{il}$}\\
		&   & &&\reflectbox{$\ddots$}  &  \\
		&   &a_{rs}&a_{r, s+1}   &  \\\hl
	&	&   a_{r+1, s}&a_{r+1, s+1}   &  \\
\end{array}\right)}
\]
\begin{align*}
	J_{5}&:=
	\sum_{
	{\substack {1\le i\le r-1\\s+2\le l\le n}}
	}
	(
	(b_{rs}-a_{rs})(l-s)+
	(b_{r,s+1}-a_{r,s+1})(l-s-1)
	\\&\ph{\sum_{
	{\substack {1\le i\le r-1\\s+2\le l\le n}}
	}}
	+
	(b_{r+1,s}-a_{r+1,s})(l-s)+
	(b_{r+1,s+1}-a_{r+1,s+1})(l-s-1))
	b_{il}
	\\&=0.
\end{align*}
	\item[{(Case 6)}]
 $1\le i\le r-1, l=s+1$.
	\[
{\renewcommand{\arraystretch}{1.5}
\left(\begin{array}{ccc|ccc}
		&  & &\fb{$a_{i, s+1}$}   &  \\
		&   &&  \vdots &  \\
		&   &a_{rs}&a_{r, s+1}   &  \\\hl
	&	&   a_{r+1, s}&a_{r+1, s+1}   &  \\
\end{array}\right)}
\]
\begin{align*}
	J_{6}&:=\sum_{i=1}^{r-1}
	\k{(b_{rs}-a_{rs})(s+1-s)+(b_{r+1, s}-a_{r+1, s})(s+1-s)}b_{i, s+1}
	\\&=0.
\end{align*}
So far, we have
\[
J=J(A, B)=J_{0}+\cd+J_{7}=J_{1}+J_{4}+J_{7}.
\]
	\item[{(Case 7)}]
 Finally, let $J_{7}:=b_{r+1, s}b_{r, s+1}-a_{r+1, s}a_{r, s+1}.$ 
	This is 1, 0, or $-1$.
\end{enumerate}
\begin{itemize}
	\item Type 1, 5, 9, 13 in Table \ref{d2} ($J_{7}=1$):
$b_{r+1, s}=b_{r, s+1}=1$. 
Due to the property on a partial sum of row entries of ASMs, 
$b_{r+1, s}=1$ implies 
\begin{align*}
	1&\geq\sum_{k=1}^{s}b_{r+1, k}=
\underbrace{\sum_{k=1}^{s-1}b_{r+1, k}}_{J_{1}}+\underbrace{b_{r+1, s}}_{1},
	\\J_{1}&=0,
	\\J_{4}&=\sum_{l=s+2}^{n}b_{rl}=0 \q \te{(likewise)\, and} 
	\\J&=0+0+1=1.
\end{align*}
\item Type 2, 6, 10, 14 ($J_{7}=0$): $b_{r+1, s}=1$ implies $J_{1}= \dsum_{k=1}^{s-1}b_{r+1, k} =0$
and similarly $a_{r, s+1}=-1$ implies 
$J_{4}=\dsum_{l=s+2}^{n}b_{rl}=\dsum_{l=s+2}^{n}a_{rl}=1$.
Thus, 
\[
J=0+1+0=1.\]
\item Type 3, 7, 11, 15 ($ J_{7}=0$): 
Likewise, 
$J_{1}=1$ and $J_{4}=0$ so that 
\[
J=1+0+0=1.
\]
\item Type 4, 8, 12, 16 ($J_{7}=-1$): we have 
$a_{r+1, s}=a_{r, s+1}=-1$ so that 
\begin{align*}
	J_{1}&=\sum_{k=1}^{s-1}b_{r+1, k}
=\sum_{k=1}^{s-1}a_{r+1, k}=1,
	\\J_{4}&=\sum_{l=s+2}^{n}b_{rl}
=\sum_{l=s+2}^{n}a_{rl}=1,
	\\J&=1+1+(-1)=1.
\end{align*}
\end{itemize}
Conclude that $J=1$ in any case.
\end{proof}

\begin{ex}
Let
$A=\left(\begin{array}{cccc}
0&0&1&0\mst\\
0&1&-1&1\mst\\
1&-1&1&0\mst\\
0&1&0&0\mst\\
\end{array}\right)$ and $B=3412$. Then, observe that 
\begin{align*}
	J(A)&=1a_{31}a_{22}+2a_{31}a_{13}+2a_{31}a_{23}+3a_{31}a_{24}
+1a_{22}a_{13}
\\&\ph{=}+1a_{32}a_{13}+1a_{32}a_{23}+2a_{32}a_{24}
	\\&\ph{=}+1a_{42}a_{13}+1a_{42}a_{23}+1a_{42}a_{33}+2a_{42}a_{24}
+1a_{33}a_{24}
	\\&=1+2-2+3+1-1+1-2+1-1+1+2+1=7=\beta(A) \te{ and }
	\\J(B)&=(3-1)+(3-2)+(4-1)+(4-2)=8=\beta(A)+1.
\end{align*}
%
%
%
\end{ex}

\begin{cor}
\[
\beta(A)=\sum_{i<j, k<l}(j-i)a_{jk}a_{il}.
\]\end{cor}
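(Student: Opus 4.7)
The plan is to deduce this corollary from Theorem \ref{mth0} by a transpose symmetry argument, so essentially no new combinatorics is needed.

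First I would verify that $\beta$ is invariant under transposition: for any $A\in\A_{n}$, $A^{T}\in\A_{n}$ as well, and from $\wt{A^{T}}(i,j)=\wt{A}(j,i)$ it follows that $A\le B$ in ASM order iff $A^{T}\le B^{T}$. Since the join-irreducibles of $(\A_{n},\le)$ are the bigrassmannian permutations, and the set of bigrassmannian permutations is closed under $w\mapsto w^{-1}$ (a permutation has a unique left descent and a unique right descent iff its inverse does, and transposition of the permutation matrix realizes $w\mapsto w^{-1}$), this transpose operation sends the set $\{v\le A : v\text{ bigrassmannian}\}$ bijectively onto $\{v\le A^{T} : v\text{ bigrassmannian}\}$. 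Hence $\beta(A)=\beta(A^{T})$.

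Next I would apply Theorem \ref{mth0} to $A^{T}$:
\[
\beta(A)=\beta(A^{T})=\sum_{i<j,\, k<l}(l-k)(A^{T})_{jk}(A^{T})_{il}=\sum_{i<j,\, k<l}(l-k)\,a_{kj}\,a_{li}.
\]
Then relabel by the substitution $(i,j,k,l)\mapsto(k,l,i,j)$: the condition $i<j,\,k<l$ is symmetric in the two pairs, so the index set is preserved, $l-k$ becomes $j-i$, and $a_{kj}a_{li}$ becomes $a_{jk}a_{il}$ after reindexing. This gives exactly
\[
\beta(A)=\sum_{i<j,\, k<l}(j-i)\,a_{jk}\,a_{il},
\]
as required.

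The only potential obstacle is justifying that transposition of ASMs really induces the desired symmetry on the bigrassmannian elements; for permutation matrices this reduces to the standard statement that $w$ is bigrassmannian iff $w^{-1}$ is, and since the bigrassmannians of $\A_{n}$ coincide with the bigrassmannian permutations, nothing more is needed.
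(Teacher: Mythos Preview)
Your proof is correct and follows essentially the same route as the paper: both invoke the transpose symmetry $\beta(A)=\beta(A^{T})$, apply Theorem~\ref{mth0} to $A^{T}$, and then reindex $(i,j,k,l)\mapsto(k,l,i,j)$. The only difference is cosmetic: the paper justifies $\beta(A)=\beta(A^{T})$ by observing (via corner sum matrices) that transposition is an order-preserving isomorphism of the graded poset $(\A_{n},\le,\beta)$ and hence preserves rank, whereas you unpack this by checking directly that the set of bigrassmannian permutations below $A$ is sent bijectively to those below $A^{T}$.
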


\begin{proof}
Considering corner sum matrices, 
the map $A\mto A^{T}$ (transpose) is an order-preserving isomorphism on $(\A_{n}, \le, \beta)$ which therefore must preserve $\beta$. 
Writing the $(p, q)$-entry of $A^{T}$ as $a'_{pq}$ (of course $a'_{pq}=a_{qp}$), we have 
\begin{align*}
	\beta(A)&=\beta(A^{T})
	\\&=\sum_{i<j, k<l}(l-k)a_{jk}'a'_{il}
	=\sum_{i<j, k<l}(l-k)a_{kj}a_{li}
	\\&=\sum_{k<l, i<j}(j-i)a_{il}a_{jk}\q \mb{($i\mto k, j\mto l, k\mto i, l\mto j$)}
	\\&=\sum_{i<j, k<l}(j-i)a_{jk}a_{il}.
\end{align*}
\end{proof}


\section{Final Remarks}
\label{rmks}

At the end, we discuss several consequences of Theorem \ref{mth0}.

\subsection{Inversions and $\b$}

We have seen interactions of $(S_{n}, \le, I)$ and $(\A_{n}, \le, \beta)$. In particular, $I$ and $\beta$ are the rank functions of these graded posets and hence both increasing. What we should not misunderstand, however, is that inversion numbers are \eh{not} necessarily increasing along an edge $A\lhd B$.
For example, with 
\[
A=
\left(\begin{array}{cccc}
0&0&1&0\mst\\
0&1&-1&1\mst\\
1&-1&1&0\mst\\
0&1&0&0\mst\\
\end{array}\right) \te{  \q and \q } 
B=\left(\begin{array}{cccc}
0&0&1&0\mst\\
0&0&0&1\mst\\
1&0&0&0\mst\\
0&1&0&0\mst\\
\end{array}\right)=3412,\]
we have $A\lhd B$ but $I(A)=5>4=I(B)$. At least we can say this:
\begin{cor}
If $A\lhd B$, then $I(A, B)\in \{-1, 0, 1\}$. 
As a result, $I(A)\le \beta(A)$ for all $A$.
\end{cor}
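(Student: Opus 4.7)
The plan is to adapt the seven-case decomposition used in the proof of Theorem \ref{mth0} to the quantity
\[
I(A,B)=\sum_{i<j,\,k<l}(b_{jk}b_{il}-a_{jk}a_{il}),
\]
i.e.\ the same running sum but with the weight $(l-k)$ stripped away. First I would verify that, without the weight, every one of Cases $0$--$6$ collapses for a purely algebraic reason: in each of those cases the free index factors out, and what remains is a sum of the differences $b_{pq}-a_{pq}$ over a row, column, or all four corners of the exchange block, which telescopes to $0$ because the block changes by $\left(\begin{smallmatrix}-1&1\\1&-1\end{smallmatrix}\right)$. For example, Case $1$ reduces to
\[
\sum_{k=1}^{s-1}b_{r+1,k}\bigl[(b_{rs}-a_{rs})+(b_{r,s+1}-a_{r,s+1})\bigr]=\sum_{k=1}^{s-1}b_{r+1,k}\cdot(-1+1)=0,
\]
and Cases $2$--$6$ collapse by entirely analogous cancellations, so only Case $7$ survives and leaves $I(A,B)=b_{r+1,s}b_{r,s+1}-a_{r+1,s}a_{r,s+1}$.

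Next I would show that this surviving term lies in $\{-1,0,1\}$. From Lemma \ref{answer} we have $b_{r+1,s}=a_{r+1,s}+1$ and $b_{r,s+1}=a_{r,s+1}+1$; combined with $a_{pq},b_{pq}\in\{-1,0,1\}$ this forces $a_{r+1,s},a_{r,s+1}\in\{-1,0\}$ and $b_{r+1,s},b_{r,s+1}\in\{0,1\}$. Hence both products $b_{r+1,s}b_{r,s+1}$ and $a_{r+1,s}a_{r,s+1}$ lie in $\{0,1\}$, and their difference lies in $\{-1,0,1\}$, which is the first assertion.

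For the consequence $I(A)\le\beta(A)$ I would induct on $\beta(A)$ along a saturated chain $I_n=A_0\lhd A_1\lhd\cdots\lhd A_{\beta(A)}=A$ provided by Lemma \ref{answer2}: the base case is $I(I_n)=0=\beta(I_n)$, and at each covering $A_k\lhd A_{k+1}$ the function $I$ rises by at most $1$ while $\beta$ rises by exactly $1$, so summing the inequalities gives $I(A)\le\beta(A)$.

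The only real obstacle is the combinatorial bookkeeping for the seven cases, but the heavy lifting has already been done in the proof of Theorem \ref{mth0}; removing the weight $(l-k)$ actually simplifies matters, since Cases $1$ and $4$ — the only nontrivial survivors in the weighted setting besides Case $7$ — now cancel for the same telescoping reason as the other cases.
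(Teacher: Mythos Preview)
Your proposal is correct and follows essentially the same approach as the paper's proof, which is just a one-sentence sketch: rerun the eight-case decomposition from Theorem~\ref{mth0} with the weight $(l-k)$ removed, observe that $I_0=\cdots=I_6=0$ while $I_7\in\{-1,0,1\}$. You supply the details the paper omits---in particular the clean observation that $a_{r+1,s},a_{r,s+1}\in\{-1,0\}$ and $b_{r+1,s},b_{r,s+1}\in\{0,1\}$ force both antidiagonal products into $\{0,1\}$, and the explicit induction along a maximal chain for $I(A)\le\beta(A)$---but the architecture is identical.
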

\begin{proof}
This proof is quite similar to the one given in 
Theorem \ref{mth1} without all factors arising from weights of inversions 
(if we compute $I=I(A, B)=I_{0}+\cd+I_{7}$ in the same way,
then we will have $I_{0}=\cd=I_{6}=0$ and $I_{7}\in \{-1, 0, 1\}$).
\end{proof}

\subsection{Non-symmetry of inversions}

It is well-known that 
$\dsum_{w\in S_{n}}\lam^{I(w)}=\di\prod_{k=1}^{n}(1+\lam +\cd+\lam ^{k-1})$. 
However, it seems very difficult to 
find $\dsum_{A\in \A_{n}} \lam^{I(A)}$.
 We could not find any answer in the litearture.
\begin{open}
Can we give a polynomial-time algorithm to compute it?
\end{open}
Below, we wish to explain a detail of this problem 
on inversions and will propose perhaps an easier problem.

\begin{defn}
Let $(P, \le)$ be a poset. Its \eh{dual} is the poset $(P^{*}, \le^{*})$ with $P^{*}=P$ as sets, and $x\le y\iff y\le^{*}x$ for $x, y\in P$.
Say $(P, \le)$ is \eh{self-dual} if there exists a bijection $f:P\to P^{*}$ such that $x\le y \iff f(x)\le^{*}f(y)$.
\end{defn}
As seen below, $(S_{n}, \le, I)$ and $(\A_{n}, \le, \beta)$ are both self-dual graded posets so that these rank generating functions
$\dsum_{w\in S_{n}}\lam^{I(w)}, 
\dsum_{A\in \A_{n}}q^{\beta(w)}$ must be symmetric (here, symmetric means coefficients are palindromic); 

\renewcommand{\l}{\lam}
However, inversions on ASMs are not like this:
for example,
\[
\sum_{A\in \A_{3}} \lam^{I(A)} =
(1+\lam)(1+\lam+\lam^{2})+\lam^{2}
=1+2\l+3\l^{2}+\l^{3}
\]
is clearly not symmetric. Apparently, the middle ASM
$A=\left(\begin{array}{ccc} 0 &1   &0   \\1  &-1   &1   \\0 &1   &0  \end{array}\right)$ with $I(A)=2$ makes this happen.
But, with little modification of $I(A)$, we can construct some function 
which is more symmetric. For this purpose, the following ideas are useful:
\begin{defn}
Let $w_{0}$ denote the reverse permutation: $w_{0}(i)=n-i+1$. 
For $A\in \A_{n}$, the \eh{dual} of $A$ is $A^{*}=w_{0}A$ (the matrix reading rows of $A$ backwards).
\end{defn}
\begin{defn}
Say $(i, j, k, l)$ is a \eh{dual inversion of} an ASM $A$ if 
$i<j, k<l$ and $a_{ik}a_{jl}\ne 0$. The \eh{dual inversion number} of $A$ 
is 
\[
I^{*}(A)=\sum_{i<j, k<l}a_{ik}a_{jl}.
\]\end{defn}
In fact, $I^{*}(A)=I(A^{*})$.
%
%

\begin{lem}\label{lem2}
$A\le B\iff B^{*}\le A^{*}$ in $\A_{n}$.
\end{lem}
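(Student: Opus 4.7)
The plan is to reduce Lemma \ref{lem2} to a single algebraic identity between corner sum matrices, after which both directions of the biconditional fall out by direct substitution into the definition of ASM order.

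First I would verify that $A^{*} = w_{0}A$ is itself an alternating sign matrix. Reversing the order of the rows of $A$ leaves every row sum and every column sum equal to $1$, leaves every partial row sum unchanged, and simply replaces top-down partial column sums of $A$ by bottom-up ones---both of which take values in $\{0, 1\}$ by the ASM condition on $A$. So $A^{*} \in \A_{n}$.

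Next I would establish the key identity
\[
\wt{A^{*}}(i, j) = j - \wt{A}(n-i, j), \qquad 0 \le i, j \le n.
\]
Writing $a^{*}_{p, q} = a_{n-p+1, q}$ and reindexing the row sum, one sees that summing entries of $A^{*}$ over the top $i$ rows and first $j$ columns is the same as summing entries of $A$ over the bottom $i$ rows and first $j$ columns. The identity then follows from the fact that the first $j$ columns of any ASM have total sum $j$, i.e.\ $\wt{A}(n, j) = j$.

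Finally I would unpack the two definitions of ASM order. By definition, $B^{*} \le A^{*}$ is equivalent to $\wt{B^{*}}(i, j) \ge \wt{A^{*}}(i, j)$ for all $i, j$, which by the identity above becomes $\wt{A}(n-i, j) \ge \wt{B}(n-i, j)$ for all $i, j$. Setting $i' = n - i$, this is precisely the condition $A \le B$; the boundary cases $i' \in \{0, n\}$ are automatic because $\wt{A}(0, j) = 0$ and $\wt{A}(n, j) = j$ for every ASM. The calculation is elementary; the only real point to watch is the index bookkeeping in the corner sum identity, and in particular the deployment of $\wt{A}(n, j) = j$ at exactly the right moment, since that is what flips the inequality and produces the \emph{contravariance} $A \le B \iff B^{*} \le A^{*}$ rather than a covariance.
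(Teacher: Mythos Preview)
Your argument is correct. The corner-sum identity $\wt{A^{*}}(i,j)=j-\wt{A}(n-i,j)$ is right (the substitution $p\mapsto n-p+1$ together with $\wt{A}(n,j)=j$ does exactly what you claim), and the index bookkeeping at the end is handled properly: as $i$ runs over $1,\dots,n$ the parameter $i'=n-i$ runs over $0,\dots,n-1$, and the missing case $i'=n$ and the extra case $i'=0$ are both trivial since $\wt{A}(0,j)=0$ and $\wt{A}(n,j)=j$ for every ASM.

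Your route, however, is genuinely different from the paper's. The paper's one-line proof invokes Lemma~\ref{answer}, the local $2\times 2$ description of covering relations: if $A\lhd B$ with exchange block $\left(\begin{smallmatrix}-1&1\\1&-1\end{smallmatrix}\right)$ at rows $r,r+1$, then reversing the row order sends this to the block $\left(\begin{smallmatrix}1&-1\\-1&1\end{smallmatrix}\right)$ at rows $n-r,n-r+1$, i.e.\ $B^{*}\lhd A^{*}$; since the poset is finite (hence generated by its covers), the full order-reversal follows. Your approach bypasses the covering-relation machinery entirely and works straight from the defining corner-sum inequalities, which makes it more self-contained---it does not depend on the classification in Lemma~\ref{answer} or on the graded structure of $(\A_{n},\le)$. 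The paper's approach, by contrast, is shorter in context because that machinery has already been set up for the main theorem.
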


\begin{proof}
This follows from Lemma \ref{answer}.
\end{proof}

\begin{defn}
Let $N(A)$ be the number of $-1$ in entries of $A$. 
\end{defn}
Note that $N(A)=N(A^{*})$.

\begin{thm}\label{in}
For all $B\in \A_{n}$, we have 
\[
I(B)+I^{*}(B)-N(B)=\ff{1}{\,2\,}n(n-1).
\]
Here we used the letter $B$ to directly apply Lemma \ref{answer} in the proof below.
\end{thm}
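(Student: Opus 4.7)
My plan is to prove the identity by computing the double sum
\[
S := \sum_{i,j,k,l=1}^{n} b_{ik}\,b_{jl}
\]
in two different ways. On the one hand, since every row of the ASM $B$ sums to $1$, the total entry-sum of $B$ equals $n$, so
\[
S = \Bigl(\sum_{i,k} b_{ik}\Bigr)\Bigl(\sum_{j,l} b_{jl}\Bigr) = n^2.
\]

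On the other hand, I would decompose $S$ according to whether $i=j$ and whether $k=l$. For the corner $i=j,\,k=l$, the contribution is $\sum_{i,k} b_{ik}^{2}$, which counts the nonzero entries $Z(B)$. For $i=j$ and $k\ne l$, expanding $\bigl(\sum_{k}b_{ik}\bigr)^{2}=1$ gives $\sum_{k\ne l}b_{ik}b_{il}=1-\sum_{k}b_{ik}^{2}$, and summing over $i$ yields $n-Z(B)$; the column-sum property handles $k=l,\,i\ne j$ symmetrically. For the fully off-diagonal part $i\ne j,\,k\ne l$, a relabeling of dummy indices identifies the four order-configurations pairwise: $(i<j,\,k<l)$ and $(i>j,\,k>l)$ each give $I^{*}(B)$, while $(i<j,\,k>l)$ and $(i>j,\,k<l)$ each give $I(B)$ (for instance, swapping $k\leftrightarrow l$ turns $\sum_{i<j,\,k>l}b_{ik}b_{jl}$ into $\sum_{i<j,\,k<l}b_{il}b_{jk}=I(B)$). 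Combining,
\[
n^{2} \;=\; Z(B) + 2\bigl(n-Z(B)\bigr) + 2I(B) + 2I^{*}(B).
\]

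To eliminate $Z(B)$, I would invoke the defining property of an ASM: in each row the number of $+1$'s minus the number of $-1$'s is $1$, so the total count of $+1$'s equals $n+N(B)$, and hence $Z(B)=n+2N(B)$. Substituting simplifies the display to
\[
2I(B)+2I^{*}(B) = n(n-1) + 2N(B),
\]
which is exactly the claimed identity $I(B)+I^{*}(B)-N(B)=\tfrac{1}{2}n(n-1)$.

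No step here is a real obstacle; the only care required is the symmetry bookkeeping that verifies the four orderings in the off-diagonal part really give $2I(B)+2I^{*}(B)$. This double-counting approach sidesteps both induction and the Lemma~\ref{answer} case analysis entirely, but an alternative proof could instead check $\Delta I + \Delta I^{*} - \Delta N = 0$ along each covering $A\lhd B$, in the spirit of the proof of Theorem~\ref{mth1}; I would expect that alternative to be more tedious since the sign of $\Delta I$ (and presumably of $\Delta I^{*}$) genuinely depends on the covering type.
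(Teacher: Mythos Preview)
Your proof is correct and is genuinely different from the paper's argument. The paper proceeds by induction on the rank $\beta(B)$: it rewrites the claim as $H(B)+H(B^{*})=\tfrac{1}{2}n(n-1)$ with $H(B)=I(B)-N(B)/2$, and then checks, using the 16-type Table~\ref{d2} together with Lemma~\ref{lem2}, that for every cover $A\lhd B$ the increment $H(A,B)$ equals the increment $H(B^{*},A^{*})$, so $F(B)=H(B)+H(B^{*})$ is constant along the poset and equals its value at $I_{n}$. Your approach instead expands $\bigl(\sum_{i,k}b_{ik}\bigr)^{2}=n^{2}$ and sorts the terms by the relative order of $(i,j)$ and $(k,l)$, using only the row/column sum conditions and the observation $Z(B)=n+2N(B)$. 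This is more elementary and entirely self-contained: it never touches the covering structure, the type table, or the duality lemma, and would go through verbatim for any $\{-1,0,1\}$ matrix with all row and column sums equal to $1$. The paper's route, by contrast, reuses the machinery already set up for Theorem~\ref{mth0} and makes the connection to the poset grading explicit; you correctly anticipate that alternative in your final paragraph.
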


For the proof, it is convenient to introduce the following.
\begin{defn}
Define $H(B)=I(B)-N(B)/2$. Call this the \eh{weak inversion number} of $B$.
\end{defn}
Then, the statement of Theorem \ref{in} is equivalent to
\[
H(B)+H(B^{*})=\ff{1}{\,2\,}n(n-1)
\]
for all $B\in \A_{n}$.

\begin{proof}[Proof of Theorem \ref{in}]
Define the function by $F(B)=H(B)+H(B^{*})$. 
We prove $F(B)=n(n-1)/2$ for all $B$ by induction on $\beta(B)$.
Suppose $\beta(B)\ge 1$. 
Choose $A$ such that $A\lhd B$ (Lemma \ref{answer}). 
It follows from Lemma \ref{lem2} that $B^{*}\lhd A^{*}$.
As easily seen from Table \ref{d2}, $H(A, B)$ of type $m$ and $H(B^{*}, A^{*})$ of type $m^{*}$ are equal in all cases. Thus
\[
F(A, B)=H(A, B)-H(B^{*}, A^{*})=0
\]
and therefore $F(B)$ is constant. Conclude that $F(B)=n(n-1)/2$.
\end{proof}

\begin{fact}
$\max \{H(A)\mid A\in \A_{n}\}=n(n-1)/2$. 
Moreover, 
\[
H(A)=\ff{1}{\,2\,}n(n-1) \iff A=w_{0}.\]
\end{fact}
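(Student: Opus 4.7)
The $\Leftarrow$ direction is immediate since $H(w_{0})=I(w_{0})-N(w_{0})/2=\binom{n}{2}-0=n(n-1)/2$. For the upper bound and the $\Rightarrow$ direction, I would invoke Theorem~\ref{in}, which yields $H(A)+H(A^{*})=n(n-1)/2$ for every $A\in\A_{n}$. Since the map $A\mapsto A^{*}$ is a bijection on $\A_{n}$ interchanging $I_{n}$ and $w_{0}$, the Fact reduces to the dual statement: $H(A)\ge 0$ for every $A\in\A_{n}$, with equality iff $A=I_{n}$.

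I would prove this dual statement by induction on $\beta(A)$; the base case $A=I_{n}$ is trivial. As a preliminary, rerun the case-analysis of the proof of Theorem~\ref{mth1} but with each weight $(l-k)$ replaced by $1$: the unweighted analogues of $J_{1}$ and $J_{4}$ now vanish (each single sum collapses via $(-1)+1=0$), so only $I_{7}$ and $N(A_{0},A)$ contribute. The surviving algebra produces the clean formula
\[
H(A_{0},A)=\tfrac{1}{2}\bigl(a_{rs}+a_{r,s+1}+a_{r+1,s}+a_{r+1,s+1}\bigr)
\]
whenever $A_{0}\lhd A$ at exchange position $(r,s)$, where the entries are those of $A_{0}$'s $2\times 2$ block. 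Since $a_{rs},a_{r+1,s+1}\in\{0,1\}$ and $a_{r,s+1},a_{r+1,s}\in\{-1,0\}$, this increment lies in $\{-1,-\tfrac{1}{2},0,\tfrac{1}{2},1\}$.

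The inductive step is then reduced to the following combinatorial lemma: for every $A\ne I_{n}$, some covering $A_{0}\lhd A$ has $A_{0}$-block sum $\ge 1$ at its exchange position (equivalently, at least one of the diagonal entries $a_{rs}$, $a_{r+1,s+1}$ of the $A_{0}$-block equals $1$). Granted the lemma, strong induction yields $H(A)\ge H(A_{0})+\tfrac{1}{2}\ge\tfrac{1}{2}>0$ for every $A\ne I_{n}$, settling both the nonnegativity and the equality characterization.

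The main obstacle is proving this combinatorial lemma. My strategy is first to seek a Type~$1$(d) cover---one whose $A_{0}$-block is $\bigl(\begin{smallmatrix}1 & 0 \\ 0 & 1\end{smallmatrix}\bigr)$ and whose $A$-block is $\bigl(\begin{smallmatrix}0 & 1 \\ 1 & 0\end{smallmatrix}\bigr)$, giving block sum $2$; existence amounts to locating two $+1$-entries of $A$ at adjacent rows and adjacent columns in identity-diagonal order. Failing that, one produces a Type~$2$ or Type~$3$ cover with block sum exactly $1$---an inverse move that ``resolves'' a nearby $-1$ entry of $A$, built from a local $A$-block of the form $\bigl(\begin{smallmatrix}0 & 1 \\ 1 & -1\end{smallmatrix}\bigr)$, $\bigl(\begin{smallmatrix}-1 & 1 \\ 1 & 0\end{smallmatrix}\bigr)$, $\bigl(\begin{smallmatrix}0 & 1 \\ 0 & 0\end{smallmatrix}\bigr)$, or $\bigl(\begin{smallmatrix}0 & 0 \\ 1 & 0\end{smallmatrix}\bigr)$. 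Existence of at least one such cover for every $A\ne I_{n}$ can be verified by a careful case inspection of the sixteen types in Table~\ref{d2} combined with the partial-sum constraints in Lemma~\ref{answer}: one argues that if every valid exchange position of $A$ had block sum $\le 0$, then every locally inverse-movable $2\times 2$ submatrix of $A$ would be forced into one of the degenerate configurations with $b = -1$ or $c = -1$ (or the all-zero block), and tracing the constraints through the iterative construction of Lemma~\ref{answer2} shows this forces $A=I_{n}$, contradicting the hypothesis.
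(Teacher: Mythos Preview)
Your reduction via Theorem~\ref{in} to the dual statement ``$H(A)\ge 0$ with equality iff $A=I_{n}$'' is fine, and your formula
\[
H(A_{0},A)=\tfrac{1}{2}\bigl(a_{rs}+a_{r,s+1}+a_{r+1,s}+a_{r+1,s+1}\bigr)
\]
for a cover $A_{0}\lhd A$ is correct (and, since the block sum is invariant under adding $\bigl(\begin{smallmatrix}-1&1\\1&-1\end{smallmatrix}\bigr)$, it equals half the $2\times 2$ block sum of $A$ as well). However, your combinatorial lemma is \emph{false}, so the induction cannot close. Take $A=3412\in S_{4}\subset\A_{4}$. This is a bigrassmannian permutation, hence join-irreducible, and its unique cover in $\A_{4}$ is
\[
A_{0}=\begin{pmatrix}0&0&1&0\\0&1&-1&1\\1&-1&1&0\\0&1&0&0\end{pmatrix}
\]
at exchange position $(2,2)$, which is type~4: the $A_{0}$-block is $\bigl(\begin{smallmatrix}1&-1\\-1&1\end{smallmatrix}\bigr)$ with block sum $0$, so $H(A_{0},A)=0$, not $\ge\tfrac12$. (Incidentally, your parenthetical ``equivalently, at least one diagonal entry equals~$1$'' is also wrong: type~4 has both diagonal entries equal to $1$ yet block sum $0$.) Your induction would give only $H(3412)\ge H(A_{0})\ge 0$, and you lose the strict positivity needed for the uniqueness claim. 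The same phenomenon recurs for higher bigrassmannians such as $[k{+}1,\ldots,2k,1,\ldots,k]$.

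The paper's argument bypasses induction entirely. It proves $H(B)\ge 0$ directly from the inequality $2I(B)\ge N(B)$: every $-1$ entry of an ASM sits in a cross pattern $\bigl(\begin{smallmatrix}&+&\\+&-&+\\&+&\end{smallmatrix}\bigr)$ of $+1$'s in its row and column, which forces enough positive inversion pairs. With $H(B)\ge 0$ in hand, Theorem~\ref{in} gives $H(A)\le n(n-1)/2$, and equality forces $H(A^{*})=0$, hence $N(A^{*})=0$ and $I(A^{*})=0$, so $A^{*}=I_{n}$ and $A=w_{0}$. This is both shorter and avoids the cover-by-cover bookkeeping.
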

\begin{proof}
We always have $H(B)\ge 0$ since 
an ASM pattern 
$\left(\begin{smallmatrix}
	&+   &   \\
	+&-   &+   \\
	&+   &
\end{smallmatrix}\right)$ 
forces there to be at least two inversions per minus sign, i.e., $2I(B)\ge N(B)$. 
Consequently, $H(A)\le H(A)+H(A^{*})=n(n-1)/2$ and in particular $H(w_{0})=I(w_{0})-N(w_{0})/2=n(n-1)/2$. 
Now suppose $H(A)=n(n-1)/2$. Then  $H(A^{*})=0$ thanks to Theorem \ref{in}.
Further, the equality $H(A^{*})=0$ forces $N(A^{*})=0$, $I(A^{*})=0$, $A^{*}=I_{n}$ and hence $A=w_{0}$.
\end{proof}


\begin{cor}
$\dsum_{A\in \A_{n}}\lam^{H(A)}$ is monic and symmetric with the highest degree term $\l^{n(n-1)/2}$.
\end{cor}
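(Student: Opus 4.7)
The plan is to read off both claims directly from the Fact and from Theorem \ref{in}. First I would check the degree and leading coefficient. By the Fact, $H(A) \le \tfrac{1}{2}n(n-1)$ for every $A \in \A_n$, with equality if and only if $A = w_0$. Hence the polynomial $P(\lambda) := \sum_{A \in \A_n} \lambda^{H(A)}$ has degree exactly $n(n-1)/2$, and the coefficient of $\lambda^{n(n-1)/2}$ is the number of $A$'s achieving the maximum, which is $1$. So $P(\lambda)$ is monic with top term $\lambda^{n(n-1)/2}$, as claimed.

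For symmetry, the key observation is that the map $A \mapsto A^*$ is an involution on $\A_n$ (since $(w_0 A)^* = w_0 w_0 A = A$ as $w_0^2 = I_n$), hence a bijection. By Theorem \ref{in}, $H(A^*) = \tfrac{1}{2}n(n-1) - H(A)$. Substituting the bijection into the sum gives
\[
P(\lambda) = \sum_{A \in \A_n} \lambda^{H(A)} = \sum_{A \in \A_n} \lambda^{H(A^*)} = \sum_{A \in \A_n} \lambda^{\frac{1}{2}n(n-1) - H(A)} = \lambda^{\frac{n(n-1)}{2}} P(\lambda^{-1}),
\]
which is exactly the statement that the coefficient sequence of $P(\lambda)$ is palindromic, i.e. $P$ is symmetric.

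There is no real obstacle here; the corollary is a formal consequence of Theorem \ref{in} (which supplies the involutive pairing on exponents) together with the Fact (which pins down the unique maximum). The only thing one must verify in passing is that $A \mapsto A^*$ genuinely acts on $\A_n$ — but this is immediate from the definition $A^* = w_0 A$ together with the definition of ASM, since left-multiplying by $w_0$ merely reverses the order of rows and preserves all row/column partial-sum conditions.
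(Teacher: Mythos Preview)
Your proof is correct and is exactly the argument the paper intends: the Fact gives the monic leading term $\lambda^{n(n-1)/2}$, and Theorem \ref{in} together with the involution $A\mapsto A^{*}$ on $\A_{n}$ gives the palindromic symmetry $P(\lambda)=\lambda^{n(n-1)/2}P(\lambda^{-1})$. The paper states the corollary without an explicit proof, treating it as an immediate consequence of those two results, so you have simply written out what was left implicit.
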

It is easy to see that 
\[
\dsum_{A\in \A_{3}}\lam^{H(A)}=1+2\l+\l^{3/2}+2\l^{2}+\l^{3}.
\]
Next, let us compute $\dsum_{A\in \A_{4}}\lam^{H(A)}$.
\[
\begin{smat}
1&0&0&0\mst\\
0&0&1&0\mst\\
0&1&-1&1\mst\\
0&0&1&0\mst\\
\end{smat},
\begin{smat}
0&1&0&0\mst\\
1&-1&1&0\mst\\
0&1&0&0\mst\\
0&0&0&1\mst\\
\end{smat}:
I=2, N=1, H=3/2,
\]
\[
\begin{smat}
0&1&0&0\mst\\
1&-1&1&0\mst\\
0&1&-1&1\mst\\
0&0&1&0\mst\\
\end{smat}:
I=3, N=2, H=2,
\]
\[
\begin{smat}
0&1&0&0\mst\\
1&-1&1&0\mst\\
0&0&0&1\mst\\
0&1&0&0\mst\\
\end{smat},
\begin{smat}
0&1&0&0\mst\\
1&-1&0&1\mst\\
0&1&0&0\mst\\
0&0&1&0\mst\\
\end{smat},
\begin{smat}
0&0&1&0\mst\\
1&0&0&0\mst\\
0&1&-1&1\mst\\
0&0&1&0\mst\\
\end{smat},
\begin{smat}
0&1&0&0\mst\\
0&0&1&0\mst\\
1&0&-1&1\mst\\
0&0&1&0\mst\\
\end{smat},
\begin{smat}
0&0&1&0\mst\\
1&0&-1&1\mst\\
0&1&0&0\mst\\
0&0&1&0\mst\\
\end{smat}:
I=3, N=1, H=5/2,
\]
\[
\begin{smat}
0&1&0&0\mst\\
1&-1&0&1\mst\\
0&0&1&0\mst\\
0&1&0&0\mst\\
\end{smat}
:I=4, N=1, H=7/2.
\]
All other 9 non-permutations are the dual of these. Thus, 
\begin{align*}
\dsum_{A\in \A_{4}}\lam^{H(A)}&=
(1+\l)(1+\l+\l^{2})(1+\l+\l^{2}+\l^{3})
\\&\ph{=}+(
2\l^{3/2}+\l^{2}+5\l^{5/2}+\l^{7/2}
+
2\l^{9/2}+\l^{4}+5\l^{7/2}+\l^{5/2}
)
\\&=
	1+3\l+2\l^{3/2}+6\l^{2}+6\l^{5/2}+6\l^{3}+6\l^{7/2}
+6\l^{4}+2\l^{9/2}+3\l^{5}+\l^{6}
\end{align*}
is symmetric.
\begin{cor}
If $A\lhd B$, then 
\[
H(A, B)\in \left\{-1, -\ff{1}{\,2\,}, 0, \ff{1}{\,2\,}, 1\right\}.
\]
\end{cor}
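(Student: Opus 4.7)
The plan is to reduce to an atomic covering step $A\lhd B$, split $H(A,B)=I(A,B)-\tfrac{1}{2}N(A,B)$ into two pieces, and bound each by reading off only the $2\times 2$ exchange block $R$ of Lemma~\ref{answer}.

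First, the preceding corollary already gives $I(A,B)\in\{-1,0,1\}$. For $N(A,B)$, since $A$ and $B$ coincide outside $R$ the quantity depends only on the four entries of $A$ at positions in $R$; the relation $B-A=\left(\begin{smallmatrix}-1&1\\1&-1\end{smallmatrix}\right)$ together with $a_{pq},b_{pq}\in\{-1,0,1\}$ forces the diagonal of $A|_{R}$ to lie in $\{0,1\}$ and the off-diagonal of $A|_{R}$ to lie in $\{-1,0\}$. This is exactly the $2^{4}=16$ types of Table~\ref{d2}, and a direct count yields
\[
N(A,B)=\#\{\text{diagonal entries of $A|_{R}$ equal to $0$}\}-\#\{\text{off-diagonal entries of $A|_{R}$ equal to $-1$}\}\in\{-2,-1,0,1,2\}.
\]

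The key second step is the coupling between $I(A,B)$ and the off-diagonal part of $N(A,B)$. Rerunning the computation of the proof of Theorem~\ref{mth1} with the weight $l-k$ replaced by $1$, every intermediate sum $I_{0},\dots,I_{6}$ vanishes, because in each of those cases the weight was merely a spectator: the cancellations came from pairs such as $(b_{rs}-a_{rs})+(b_{r,s+1}-a_{r,s+1})=0$, which cancel even more trivially without the weight. Hence
\[
I(A,B)=I_{7}=b_{r+1,s}b_{r,s+1}-a_{r+1,s}a_{r,s+1},
\]
a quantity determined by the off-diagonal pair alone. A short case analysis finishes the job: $I(A,B)=+1\iff a_{r+1,s}=a_{r,s+1}=0$, so the off-diagonal contribution to $N(A,B)$ is $0$ and $N(A,B)\in\{0,1,2\}$, giving $H(A,B)\in\{0,\tfrac12,1\}$; symmetrically $I(A,B)=-1\iff a_{r+1,s}=a_{r,s+1}=-1$, giving $H(A,B)\in\{-1,-\tfrac12,0\}$; and $I(A,B)=0$ corresponds to exactly one off-diagonal entry equal to $-1$, giving $H(A,B)\in\{-\tfrac12,0,\tfrac12\}$. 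The union is $\{-1,-\tfrac12,0,\tfrac12,1\}$, as claimed.

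The main obstacle I anticipate is not conceptual but clerical: one must verify that Cases 1 and 4 of the proof of Theorem~\ref{mth1} collapse cleanly once the weights are stripped, that is, that $(-1)(s-k)+(+1)(s+1-k)=1$ and $(l-s)-(l-s-1)=1$ degenerate to $(-1)+(+1)=0$ when all weight factors are replaced by $1$. This is straightforward, and the remaining Cases 0, 2, 3, 5, 6 were already weightless in spirit, so no new identities need to be proved.
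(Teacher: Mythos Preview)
Your argument is correct. The paper's own proof is the one-liner ``See Table~\ref{d2}'': it simply tabulates $I(A,B)$, $\tfrac12 N(A,B)$, and $H(A,B)$ for each of the sixteen types and reads off the last column. Your route is genuinely different and more structural: rather than enumerating all sixteen cases, you split $N(A,B)$ into diagonal and off-diagonal contributions on the exchange block $R$, observe that $I(A,B)=I_7=a_{r+1,s}+a_{r,s+1}+1$ depends on exactly the same off-diagonal data, and then let the two independent diagonal entries of $A|_R$ contribute the remaining $\{0,\tfrac12,1\}$ of freedom. This buys an explanation of \emph{why} the range is $\{-1,-\tfrac12,0,\tfrac12,1\}$ (the off-diagonal pair pins $I(A,B)$ and simultaneously cancels two units of $\tfrac12 N(A,B)$, leaving the diagonal pair to shift by at most~$1$), whereas the paper's table merely verifies it. The cost is that you must redo the $I_0=\cdots=I_6=0$ computation without weights; as you note, Cases~1 and~4 are the only ones where the weight was doing real work, and there the identity $(-1)+(+1)=0$ replaces $(-1)(s-k)+(+1)(s+1-k)=1$, so the check is immediate.
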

\begin{proof}
See Table \ref{d2}.
\end{proof}

\begin{open}
What is $\dsum_{A\in \A_{n}}\lam^{H(A)}$ (a polynomial in $\l^{1/2}$)? 
\end{open}

\begin{rmk}
We can regard $H(A)$ as a sort of weighted counting of inversions:
For each $(p, q)$, let 
\[
H_{pq}(A)=a_{pq}
\k{
\ff{1}{\,2\,}
\k{\sum_{r>p, s<q}a_{rs}
+\sum_{r<p, s>q}a_{rs}
}
+\ff{1}{\,4\,}
\k{
\sum_{r=1}^{p-1}a_{rq}
+
\sum_{s=q+1}^{n}a_{ps}
}
}.
\]
If $a_{pq}=0$, then, $H_{pq}(A)=0$.
If $a_{pq}=1$, then 
\begin{align*}
	H_{pq}(A)&=a_{pq}
\ff{1}{\,2\,}
\k{\sum_{r>p, s<q}a_{rs}
+\sum_{r<p, s>q}a_{rs}
}
+\ff{1}{\,4\,}
\underbrace{\k{
\sum_{r=1}^{p-1}a_{rq}
+
\sum_{s=q+1}^{n}a_{ps}
}}_{0+0}
	\\&=\ff{1}{\,2\,}
\k{\sum_{r>p, s<q}a_{pq}a_{rs}
+\sum_{r<p, s>q}a_{pq}a_{rs}
}
.
\end{align*}
If $a_{pq}=-1$, then 
\begin{align*}
	H_{pq}(A)&=a_{pq}
\ff{1}{\,2\,}
\k{\sum_{r>p, s<q}a_{rs}
+\sum_{r<p, s>q}a_{rs}
}
-\ff{1}{\,4\,}
\underbrace{
\k{
\sum_{r=1}^{p-1}a_{rq}
+
\sum_{s=q+1}^{n}
a_{ps}}
}_{1+1}
	\\&=\ff{1}{\,2\,}
\k{\sum_{r>p, s<q}a_{pq}a_{rs}
+\sum_{r<p, s>q}a_{pq}a_{rs}
}
-\ff{1}{\,2\,}.
\end{align*}
Altogether, we get
\[
H(A)=\sum_{p, q=1}^{n}H_{pq}(A)=I(A)-\ff{N(A)}{2}
\]
with $\ff{a_{jk}a_{il}}{2}$ appearing exactly twice for each inversion $(i, j, k, l)$ of $A$. This is why we called $H(A)$ the weak inversion number. 
\end{rmk}

%
%
The author recently showed \cite{kob2} that 
	\[
\dsum_{w\in S_{n}}(-1)^{I(w)}q^{\beta(w)}=
	\di\prod_{k=1}^{n-1}(1-q^{k})^{n-k}.\]
Further, we can consider bivariate statistics.
\begin{open}
Give an efficient method to compute the following generating functions:
\[
	\sum_{w\in S_{n}} \lam^{I(w)}q^{\beta(w)}, 
\q
\sum_{A\in \A_{n}} \lam^{I(A)}q^{\beta(A)}
\q 
\sum_{A\in \A_{n}} \lam^{H(A)}q^{\beta(A)}
\]
\end{open}

\subsection{Coxeter group analogy}
The symmetric group is a Coxeter group of type A; 
some other types also have a representation of 
certain permutations (matrices) and it all makes sense to speak of inversions, Bruhat order, bigrassmannian and join-irreducible elements. Try something similar in this article for type B or Affine type $\wt{\tn{A}}$ and see what happens; Geck-Kim \cite{geck} discussed some on type B 
and it seems that Reading-Waugh \cite{readingwaugh} is 
the only reference which studied Affine join-irreducible permutations.

\appendix

\begin{table}[h!]
\caption{16 types of covering relations $A\lhd B$ in $\A_{n}$}
\renewcommand{\arraystretch}{1.7}
\resizebox{0.95\tw}{!}{
\begin{tabular}{|c|c|c||c|c|c|}\hline
type&
$\left(\begin{array}{cc}b_{rs}& b_{r, s+1} \\b_{r+1, s}  &b_{r+1, s+1}  \end{array}\right)$&
$\left(\begin{array}{cc}a_{rs}& a_{r, s+1} \\a_{r+1, s}  &a_{r+1, s+1}  \end{array}\right)$
&$I(A, B)$&$\ff{1}{\,2\,}{N(A, B)}$
&$H(A, B)$
\\\hline
$1=1^{*}$&$\left(\begin{array}{cc}{0} & 1 \\1  &0  \end{array}\right)$&
$\left(\begin{array}{cc}1&0  \\0  &1  \end{array}\right)$
&1&0&1
\\\hline
$2=5^{*}$&$\left(\begin{array}{cc}{0} & 0 \\1  &0  \end{array}\right)$&$\left(\begin{array}{cc}1&  -1\\ 0 & 1 \end{array}\right)$
&0&\nhalf&\half
\\\hline
$3=9^{*}$&$\left(\begin{array}{cc}{0} & 1 \\0  &0  \end{array}\right)$&
$\left(\begin{array}{cc}1& 0 \\-1  & 1 \end{array}\right)$
&0&\nhalf&\half
\\\hline
$4=13^{*}$&$\left(\begin{array}{cc}{0} & 0 \\0  &0  \end{array}\right)$&$\left(\begin{array}{cc}1&  -1\\-1  &1  \end{array}\right)$
&$-1$&$-1$&0
\\\hline
$5=2^{*}$&$\left(\begin{array}{cc}{0} & 1 \\1  &-1  \end{array}\right)$&
$\left(\begin{array}{cc}1& 0 \\ 0 & 0 \end{array}\right)$
&1&\half&\half
\\\hline
$6=6^{*}$&$\left(\begin{array}{cc}{0} & 0 \\1  &-1  \end{array}\right)$&$\left(\begin{array}{cc}1& -1 \\0  &0  \end{array}\right)$
&0&0&0
\\\hline
$7=10^{*}$&$\left(\begin{array}{cc}{0} & 1 \\0  &-1  \end{array}\right)$&$\left(\begin{array}{cc}1& 0 \\ -1 & 0 \end{array}\right)$
&0&0&0
\\\hline
$8=14^{*}$&$\left(\begin{array}{cc}{0} & 0 \\0  &-1  \end{array}\right)$&$\left(\begin{array}{cc}1& -1 \\-1  &0  \end{array}\right)$
&$-1$&\nhalf&\nhalf
\\\hline
$9=3^{*}$&$\left(\begin{array}{cc}-1 & 1 \\1  &0  \end{array}\right)$&
$\left(\begin{array}{cc}0&0  \\0  &1  \end{array}\right)$
&1&\half&\half
\\\hline
$10=7^{*}$&$\left(\begin{array}{cc}{-1} & 0 \\1  &0  \end{array}\right)$&$\left(\begin{array}{cc}0&  -1\\ 0 & 1 \end{array}\right)$
&0&0&0
\\\hline
$11=11^{*}$&$\left(\begin{array}{cc}{-1} & 1 \\0  &0  \end{array}\right)$&
$\left(\begin{array}{cc}0& 0 \\-1  & 1 \end{array}\right)$
&0&0&0
\\\hline
$12=15^{*}$&$\left(\begin{array}{cc}{-1} & 0 \\0  &0  \end{array}\right)$&$\left(\begin{array}{cc}0&  -1\\-1  &1  \end{array}\right)$
&$-1$&\nhalf&\nhalf
\\\hline
$13=4^{*}$&$\left(\begin{array}{cc}{-1} & 1 \\1  &-1  \end{array}\right)$&
$\left(\begin{array}{cc}0& 0 \\ 0 & 0 \end{array}\right)$
&1&1&0
\\\hline
$14=8^{*}$&$\left(\begin{array}{cc}{-1} & 0 \\1  &-1  \end{array}\right)$&$\left(\begin{array}{cc}0& -1 \\0  &0  \end{array}\right)$
&0&\half&\nhalf
\\\hline
$15=12^{*}$&$\left(\begin{array}{cc}{-1} & 1 \\0  &-1  \end{array}\right)$&$\left(\begin{array}{cc}0& 0 \\ -1 & 0 \end{array}\right)$
&0&\half&\nhalf
\\\hline
$16=16^{*}$&$\left(\begin{array}{cc}{-1} & 0 \\0  &-1  \end{array}\right)$&$\left(\begin{array}{cc}0& -1 \\-1  &0  \end{array}\right)$
&$-1$&0&$-1$
\\\hline
\end{tabular}
}
\label{d2}
\end{table}%

\begin{figure}[h!]
\caption{$(\A_{4}, \lhd)$ with 10 bigrassmannian permutations}
\label{f2}
\begin{center}
\mb{}\vfill
\resizebox{1\tw}{!}
{
\begin{xy}
(0,0);<24mm,0mm>:
,(3,1.5)+(0,0)*{
\fboxrule3pt\fcolorbox[gray]{0}{0.85}{\LARGE \st \,1243\,}}="31"
,(5,1.5)+(0,0)*{\fboxrule3pt\fcolorbox[gray]{0}{0.85}{\LARGE \mst \,1324\,}}="51"
,(7,1.5)+(0,0)*{\fboxrule3pt\fcolorbox[gray]{0}{0.85}{\LARGE \mst \,2134\,}}="71"
,(9,4.5)+(0,0)*{\fboxrule3pt\fcolorbox[gray]{0}{0.85}{\LARGE \mst \,2314\,}}="93"
,(6,9)+(0,1)*{\fboxrule3pt\fcolorbox[gray]{0}{0.85}{\LARGE \mst \,4123\,}}="66"
,(5,0)*{\fboxrule1pt\fcolorbox[gray]{0}{1}
{\LARGE \mst \,1234\,}}="50"
,(0,6)*{\fboxrule1pt\fcolorbox[gray]{0}{1}
{\LARGE \mst \,1432\,}}="04"
,(1,10.5)+(0,1.0)*{\fboxrule1pt\fcolorbox[gray]{0}{1}{\LARGE \mst \,2431\,}}="17"
,(5,3)+(0,0)*{\fboxrule1pt\fcolorbox[gray]{0}{1}{\LARGE \mst \,2143\,}}="52"
,(10,6)*{\fboxrule1pt\fcolorbox[gray]{0}{1}{\LARGE \mst \,3214\,}}="a4"
,(3,10.5)+(0,1.0)*{\fboxrule1pt\fcolorbox[gray]{0}{1}{\LARGE \mst \,4132\,}}="37"
,(1,4.5)+(0,0)*{\fboxrule3pt\fcolorbox[gray]{0}{0.85}{\LARGE \mst \,1342\,}}="13"
,(2,7.5)+(0,0.5)*{\fboxrule1pt\fcolorbox[gray]{0}{1}{\LARGE \mst \,3142\,}}="25"
,(7,10.5)+(0,1.0)*{\fboxrule1pt\fcolorbox[gray]{0}{1}{\LARGE \mst \,3241\,}}="77"
,(3,4.5)+(0,0)*{\fboxrule3pt\fcolorbox[gray]{0}{0.85}{\LARGE \mst \,1423\,}}="33"
,(8,7.5)+(0,0.5)*{\fboxrule1pt\fcolorbox[gray]{0}{1}{\LARGE \mst \,2413\,}}="85"
,(9,10.5)+(0,1.0)*{\fboxrule1pt\fcolorbox[gray]{0}{1}{\LARGE \mst \,4213\,}}="97"
,(7,4.5)+(0,0)*{\fboxrule3pt\fcolorbox[gray]{0}{0.85}{\LARGE \mst \,3124\,}}="73"
,(4,9)+(0,1)*{\fboxrule3pt\fcolorbox[gray]{0}{0.85}{\LARGE \mst \,2341\,}}="46"
,(5,12)+(0,1.0)*{\fboxrule3pt\fcolorbox[gray]{0}{0.85}{\LARGE \mst \,3412\,}}="58"
,(3,13.5)+(0,1.0)*{\fboxrule1pt\fcolorbox[gray]{0}{1}{\LARGE \mst \,3421\,}}="39"
,(5,13.5)+(0,1.0)*{\fboxrule1pt\fcolorbox[gray]{0}{1}{\LARGE \mst \,4231\,}}="59"
,(7,13.5)+(0,1.0)*{\fboxrule1pt\fcolorbox[gray]{0}{1}{\LARGE \mst \,4312\,}}="79"
,(5,15)+(0,1.0)*{\fboxrule1pt\fcolorbox[gray]{0}{1}{\LARGE \mst \,4321\,}}="5a"
,(3,3)+(0,0)*++[F]{
\begin{smat}
1&0&0&0\mst\\
0&0&1&0\mst\\
0&1&-1&1\mst\\
0&0&1&0\mst\\
\end{smat}
}="32",
,(7,3)+(0,0)*++[F]
{
\begin{smat}
0&1&0&0\mst\\
1&-1&1&0\mst\\
0&1&0&0\mst\\
0&0&0&1\mst\\
\end{smat}
}="72",
,(5,4.5)+(0,0)*++[F]{
\begin{smat}
0&1&0&0\mst\\
1&-1&1&0\mst\\
0&1&-1&1\mst\\
0&0&1&0\mst\\
\end{smat}
}="53",
,(2,6)*++[F]{
\begin{smat}
0&1&0&0\mst\\
1&-1&1&0\mst\\
0&0&0&1\mst\\
0&1&0&0\mst\\
\end{smat}
}="24",
,(4,6)*++[F]{
\begin{smat}
0&1&0&0\mst\\
1&-1&0&1\mst\\
0&1&0&0\mst\\
0&0&1&0\mst\\
\end{smat}
}="44",
,(6,6)*++[F]{
\begin{smat}
0&0&1&0\mst\\
1&0&0&0\mst\\
0&1&-1&1\mst\\
0&0&1&0\mst\\
\end{smat}
}="64",
,(8,6)*++[F]{
\begin{smat}
0&1&0&0\mst\\
0&0&1&0\mst\\
1&0&-1&1\mst\\
0&0&1&0\mst\\
\end{smat}
}="84",
,(0,7.5)+(0,0.5)*++[F]{
\begin{smat}
0&1&0&0\mst\\
1&-1&0&1\mst\\
0&0&1&0\mst\\
0&1&0&0\mst\\
\end{smat}
}="05",
,(4,7.5)+(0,0.5)*++[F]{
\begin{smat}
0&1&0&0\mst\\
0&0&1&0\mst\\
1&-1&0&1\mst\\
0&1&0&0\mst\\
\end{smat}
}="45",
,(6,7.5)+(0,0.5)*++[F]{
\begin{smat}
0&0&1&0\mst\\
1&0&-1&1\mst\\
0&1&0&0\mst\\
0&0&1&0\mst\\
\end{smat}
}="65",
,(10,7.5)+(0,0.5)*++[F]{
\begin{smat}
0&0&1&0\mst\\
0&1&0&0\mst\\
1&0&-1&1\mst\\
0&0&1&0\mst\\
\end{smat}
}="a5",
,(0,9)+(0,1)*++[F]{
\begin{smat}
0&0&1&0\mst\\
1&0&-1&1\mst\\
0&0&1&0\mst\\
0&1&0&0\mst\\
\end{smat}
}="06",
,(2,9)+(0,1)*++[F]{
\begin{smat}
0&1&0&0\mst\\
0&0&0&1\mst\\
1&-1&1&0\mst\\
0&1&0&0\mst\\
\end{smat}
}="26",
,(8,9)+(0,1)*++[F]{
\begin{smat}
0&0&1&0\mst\\
0&1&0&0\mst\\
1&-1&0&1\mst\\
0&1&0&0\mst\\
\end{smat}
}="86",
,(10,9)+(0,1)*++[F]{
\begin{smat}
0&0&1&0\mst\\
0&1&-1&1\mst\\
1&0&0&0\mst\\
0&0&1&0\mst\\
\end{smat}
}="a6",
,(5,10.5)+(0,1.0)*++[F]{
\begin{smat}
0&0&1&0\mst\\
0&1&-1&1\mst\\
1&-1&1&0\mst\\
0&1&0&0\mst\\
\end{smat}
}="57",
,(3,12)+(0,1.0)*++[F]{
\begin{smat}
0&0&1&0\mst\\
0&1&-1&1\mst\\
0&0&1&0\mst\\
1&0&0&0\mst\\
\end{smat}
}="38",
,(7,12)+(0,1.0)*++[F]{
\begin{smat}
0&0&0&1\mst\\
0&1&0&0\mst\\
1&-1&1&0\mst\\
0&1&0&0\mst\\
\end{smat}
}="78",
,\ar@{-}"50";"31",\ar@{-}"50";"51",\ar@{-}"50";"71"
,\ar@{-}"31";"32",\ar@{-}"31";"52",\ar@{-}"51";"32"
,\ar@{-}"51";"72",\ar@{-}"24";"25",\ar@{-}"06";"37"
,\ar@{-}"71";"52",\ar@{-}"24";"45",\ar@{-}"06";"57"
,\ar@{-}"71";"72",\ar@{-}"44";"05",\ar@{-}"26";"17"
,\ar@{-}"32";"13",\ar@{-}"44";"65",\ar@{-}"26";"57"
,\ar@{-}"32";"33",\ar@{-}"44";"85",\ar@{-}"46";"17"
,\ar@{-}"32";"53",\ar@{-}"64";"25",\ar@{-}"46";"77"
,\ar@{-}"52";"53",\ar@{-}"64";"65",\ar@{-}"66";"37"
,\ar@{-}"72";"53",\ar@{-}"64";"a5",\ar@{-}"66";"97"
,\ar@{-}"72";"73",\ar@{-}"84";"45",\ar@{-}"86";"57"
,\ar@{-}"72";"93",\ar@{-}"84";"85",\ar@{-}"86";"77"
,\ar@{-}"13";"04",\ar@{-}"84";"a5",\ar@{-}"a6";"57"
,\ar@{-}"13";"24",\ar@{-}"a4";"a5",\ar@{-}"a6";"97"
,\ar@{-}"33";"04",\ar@{-}"05";"06",\ar@{-}"17";"38"
,\ar@{-}"33";"44",\ar@{-}"05";"26",\ar@{-}"37";"78"
,\ar@{-}"53";"24",\ar@{-}"25";"06",\ar@{-}"57";"38"
,\ar@{-}"53";"44",\ar@{-}"25";"86",\ar@{-}"57";"58"
,\ar@{-}"53";"64",\ar@{-}"45";"26",\ar@{-}"57";"78"
,\ar@{-}"53";"84",\ar@{-}"45";"46",\ar@{-}"77";"38"
,\ar@{-}"73";"64",\ar@{-}"45";"86",\ar@{-}"97";"78"
,\ar@{-}"73";"a4",\ar@{-}"65";"06",\ar@{-}"38";"39"
,\ar@{-}"93";"a4",\ar@{-}"65";"66",\ar@{-}"38";"59"
,\ar@{-}"93";"84",\ar@{-}"65";"a6",\ar@{-}"58";"39"
,\ar@{-}"04";"05",\ar@{-}"a5";"86",\ar@{-}"58";"79"
,\ar@{-}"24";"05",\ar@{-}"a5";"a6",\ar@{-}"78";"59"
,\ar@{-}"78";"79",\ar@{-}"39";"5a",\ar@{-}"59";"5a"
,\ar@{-}"79";"5a",\ar@{-}"85";"a6"
,\ar@{-}"85";"26"
\end{xy}
}
\end{center}
\end{figure}

\begin{center}
{\bf Acknowledgment.}\\
The author would like to thank the editor Nathan Reading and the anonymous referee for many helpful comments and suggestions to improve the manuscript.
\end{center}

\end{document}